\title{On Colourability of Polygon Visibility Graphs%
\thanks{{P.~Hlin\v en\'y and O.~\c{C}a\u{g}irici have been
		supported by the {Czech Science Foundation}, project no. {17-00837S}.}} }
\author{Onur \c{C}a\u{g}irici%
	\thanks{Department of Computer Science, Masaryk University, Brno, CZ 
		(\email{onur@mail.muni.cz}, \email{hlineny@fi.muni.cz},
			\email{bodhayan.roy@gmail.com}).}
	\and Petr Hlin\v en\'y\footnotemark[2]
	\and Bodhayan Roy\footnotemark[2]
	\thanks{Current affiliation: Indian Institute of Technology Kharagpur, India}
}
\begin{document}
	
	\maketitle
	
	\begin{abstract}
		We study the problem of colouring visibility graphs of polygons.
		In particular, for visibility graphs of simple polygons,
		we provide a polynomial algorithm for $4$-colouring, and prove that the
		$5$-colourability question is already NP-complete for them.
		For visibility graphs of polygons with holes, we prove that the $4$-colourability question is NP-complete.
		
	\end{abstract}
	
	\begin{keywords}
		Polygon visibility graph;
		graph coloring;
		algorithmic complexity;
		NP-hardness
	\end{keywords}
	
	\begin{AMS}
		68Q25, 68R10, 68U05
	\end{AMS}

	\section{Introduction} \label{sec:intro}
	
	Visibility graphs are widely studied graph classes in computational
	geometry.  Geometric sets such as sets of points or line segments, polygons, polygons with
	obstacles, etc., all can correspond to specific visibility graphs, and have
	uses in robotics, signal processing, security paradigms, decomposing shapes into clusters
	\cite{Aichholzer2011ConvexifyingPW,AIGNER19841,Latombe:1991:RMP:532147,g-vap-07,o-agta-87}. 
	We study the visibility graphs of simple polygons in the Euclidean
	plane, but we also mention polygons with (again polygonal) holes in
	Section~\ref{sec:polygonswithholes}.
	To make things clear, all polygons in the paper are simple unless stated otherwise.
	
	Given an $n$-vertex polygon $P$ (not necessarily convex) in the
	plane, two points $p$ and $q$ of $P$ are said to be \emph{mutually visible} if,
	and only if the line segment $\overline{pq}$ does not intersect the
	exterior of $P$.  The $n$-vertex visibility graph $G(V,E)$ of $P$ is defined as
	follows.  The vertex set $V$ of $G$ contains a vertex $v_i$ if, and only if,
	the polygon $P$ contains the point $p_i$ as its vertex.
	The edge set $E$ of $G$ contains an edge $\{v_i,v_j\}$ if, and only if, 
	the points $p_i$ and $p_j$ are mutually visible.
	Given a polygon $P$ in the plane, we can compute its visibility graph $G$ in $\mathcal{O}(n^2)$
	time using the polygon triangulation method \cite{OROURKE1998105,Hershberger89}. 
	Hence, in this paper, we slightly abuse notation by not distinguishing
	between a polygon $P$ and its visibility graph $G$ and referring to a
	polygon vertex $p_i$ as to the corresponding $G$-vertex~$v_i$.

	Visibility graphs of polygons have been studied with respect to various theoretical and practical computational problems.
	The complexities of several popular optimization problems
	have been determined for visibility graphs of polygons.
	A geometric variation of the dominating set problem, namely polygon guarding, is one of the most studied problems
	in computational geometry and is known as the Art Gallery Problem \cite{o-agta-87}. 
	It has been studied extensively
	for both polygons with and without holes and has been found to be NP-hard in both cases \cite{ll-ccagp-86,os-snpd-83}.
	Besides, given a polygon, computing a maximum independent set is known to be hard, 
	due to Shermer \cite{s-hpip-89} (see also~\cite{ls-cavg-95} for	other problems),
	while computing a maximum clique has been shown to be in polynomial time by Ghosh et al.~\cite{gsbg-cmcvgsp-07}.
	
	A \emph{proper vertex colouring} of a graph is an assignment of labels or colours to the vertices of
	the graph so that no two adjacent vertices have the same colours. Henceforth, 
	when we say colouring a graph, we refer to proper vertex colouring.
	The \emph{chromatic number} of a graph is defined as the minimum number of colours used in any proper colouring of the graph.
	Visibility graph colouring has been studied for various types of visibility graphs.
	Babbitt et al. gave upper bounds for the chromatic numbers of k-visibility graphs of arcs and segments \cite{JGAA-362}.
	K\'ara et al.\ characterized 3-colourable visibility graphs of point sets and
	described a super-polynomial lower bound on the chromatic number with respect to
	the clique number of visibility graphs of point sets \cite{kpw-ocnv-2005}.
	Pfender showed that, as for general graphs, the chromatic number of visibility graphs 
	of point sets is also not upper-bounded by their clique numbers \cite{p-vgps-2008}.
	Diwan and Roy showed that for visibility graphs of point sets, the 5-colouring
	problem is NP-hard, but 4-colouring is solvable in polynomial time \cite{pvg_col}.
	
	The problem of {\em colouring} the visibility graphs of given polygons 
	has been studied in the special context where each internal point of the
	polygon is seen by a vertex, whose colour appears exactly
	once among the vertices visible to that point \cite{Bartschi:2014,hoffmann:2015,FeketeFHM014}. 
	However, little is known on colouring visibility graphs of polygons without such constraints.
	Although 3-colouring is NP-hard for general graphs \cite{cai-79}, in particular it is rather trivial
	to solve it for visibility graphs of polygons in polynomial time using a greedy approach. 
	With $4$ colours the same question has been open so far
	(precisely, until the conference paper~\cite{DBLP:conf/fsttcs/CagiriciHR17}).
	
	In this paper we completely settle the complexity question of
	the general problem of colouring polygonal visibility graphs,
	which was declared open in 1995 by Lin and Skiena \cite{ls-cavg-95}.
	In Section \ref{sec1}, we provide a polynomial-time algorithm to find a $4$-colouring
	of a given graph $G$ with the promise that $G$ is the visibility graph of some polygon, if $G$ is indeed $4$-colourable.
	On the other hand, in Section \ref{sec:hardness5} we provide a reduction showing that the question of
	$k$-colourability of the visibility graph of a given simple polygon 
	is NP-complete for any~$k\geq5$.
	We remark that in the conference version of this
	paper~\cite{DBLP:conf/fsttcs/CagiriciHR17} we used a different
	reduction showing hardness only for~$k\geq6$.
	In Section~\ref{sec:polygonswithholes}, we additionally show that
	already the question of $4$-colourability 
	of visibility graphs of polygons with holes is an NP-complete problem.
	
	\section{4-Colouring visibility graphs} \label{sec1}
	
	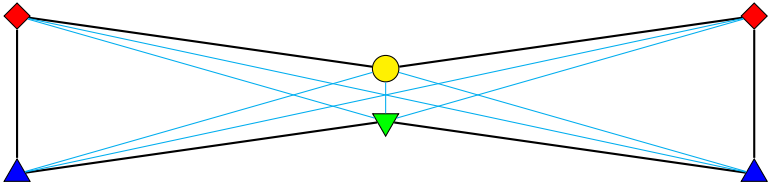
\begin{figure}
		\centering
		\begin{tikzpicture}[scale=0.7,
			triangle/.style = {regular polygon, regular polygon sides=3},
			dtriangle/.style = {regular polygon, regular polygon sides=3, rotate=180}
		]

		\tikzstyle{every node}=[draw, fill=white, shape=circle, minimum size=10pt, inner sep=2pt];
		\node[fill=blue,triangle] (A) at (-5,0) {};
		\node[fill=red,diamond] (B) at (-5,3) {};
		\node[fill=yellow] (C) at (2,2) {};
		\node[fill=red,diamond] (D) at (9,3) {};
		\node[fill=blue,triangle] (E) at (9,0) {};
		\node[fill=green,dtriangle] (F) at (2,1) {};
		
		\draw[thick] (A)--(B)--(C)--(D)--(E)--(F)--(A);
		\tikzstyle{every path}=[color=cyan];
		\draw (A)--(C);
		\draw (A)--(D);
		\draw (B)--(E);
		\draw (B)--(F);
		\draw (C)--(E);
		\draw (C)--(F);
		\draw (D)--(F);
		
		\end{tikzpicture}
		\caption{A visibility graph that is non-planar, but is 4-colourable.
		To improve readability of a possible monochromatic variant of the
		picture, we also indicate different colours by different node shapes.}
		\label{fig:nonplanar}
	\end{figure}
	
	In this section, we study the algorithmic question of $4$-colourability of the
	visibility graph of a given polygon.
	The full structure of $4$-colourable visibility graphs is not yet known
	and it seems to be non-trivial. 
	For instance, if a visibility graph is planar, it is obviously $4$-colourable.
	Though, if such a graph contains $K_5$, then it is neither planar nor $4$-colourable, 
	but a visibility graph not containing any $K_5$ may be non-planar yet $4$-colourable
	(Figure \ref{fig:nonplanar}).
	
	The related algorithmic problem of 3-colouring visibility graphs is rather
	easy to resolve as follows.
	Every simple polygon can be triangulated and, in such a triangulation, 
	every non-boundary edge is contained in two triangles.
	One can then proceed greedily edge by edge: 
	Suppose a triangle has already been coloured, and it
	shares an edge with a triangle that is not fully coloured. 
	Then the two end vertices of the shared edge 
	uniquely determine the colour of the third vertex of the uncoloured triangle. 
	
	Our algorithm essentially generalizes the 3-colouring method for 4-colouring.
	We first divide the polygon into \emph{reduced polygons}.
	A polygon $P$ is called a reduced polygon, if every chord of $P$
	(i.e., an internal diagonal) is intersected by another chord of $P$.
	After the division, we find and colour in each reduced subpolygon a triangle 
	(a $K_3$ subgraph) with three distinct colours.
	Subsequently, whenever we find an uncoloured vertex $v$ adjacent to some three
	vertices coloured with three distinct colours (such as, to an already
	coloured triangle), we can uniquely colour also $v$, by the fourth colour.
	We will show that we can exhaust all vertices of a reduced subpolygon in this manner.
	Furthermore, we check for possible colouring conflicts -- since the
	colouring process is unique, this suffices to solve $4$-colourability.
	
	Altogether, this will lead to the following theorem.
	\begin{theorem} \label{colthm}
		The 4-colourability problem is solvable in polynomial time for visibility graphs of simple polygons,
		and if a 4-colouring exists, then it can be computed in polynomial time
		from the given input graph (even without a visibility representation).
	\end{theorem}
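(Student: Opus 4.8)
The plan is to follow the decomposition-and-forcing strategy sketched above, but to make two points precise: that a reduced subpolygon is, for colouring purposes, completely determined by a single coloured triangle, and that the decomposition into reduced subpolygons together with all forcing can be carried out purely on the abstract graph $G$, as the theorem demands.

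First I would analyse the \emph{free chords} of $P$, i.e.\ chords crossed by no other chord. Since a free chord is crossed by nothing, in particular no two free chords cross each other; hence the free chords are pairwise non-crossing and partition $P$ into regions. I would then check that each region $R$ is a reduced subpolygon: any chord lying inside $R$ is non-free in $P$, so it is crossed by some chord $d'$, and $d'$ cannot cross the free chords bounding $R$, whence $d'$ stays inside $R$ and witnesses reducedness of $R$. The adjacency of these regions across free chords is a tree $\mathcal{T}$, since non-crossing diagonals of a polygon have a tree-shaped dual. The crucial structural observation I would isolate is a \emph{separator property}: any edge of $G$ joining vertices in two different regions would be a chord crossing a free chord, which is impossible, so distinct regions interact only through the shared endpoints of free chords. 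Consequently the two endpoints of a free chord form a $2$-vertex cut of $G$, and the whole tree $\mathcal{T}$ is recoverable from the $2$-cut structure of $G$ (in the spirit of triconnected components), with no appeal to any geometric drawing — which is exactly what is needed for the ``without a visibility representation'' part of the statement.

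The heart of the argument, and the step I expect to be the \textbf{main obstacle}, is the forcing lemma inside a reduced subpolygon: fixing a proper colouring of one triangle and then repeatedly applying the forcing rule (an uncoloured vertex seeing three distinctly coloured vertices must take the fourth colour) colours \emph{every} vertex of $R$, uniquely. The natural route is to fix a triangulation of $R$ and argue that the forcing never stalls before exhausting $R$: in the $3$-colouring analysis the triangulation dual already propagates colours across shared edges, and the hypothesis that every chord is crossed supplies the extra incident edge needed to pin the fourth colour exactly when a triangulation edge alone would leave two choices. Turning this into a proof means showing that whenever the coloured set is nonempty and proper but not all of $R$, there is a frontier vertex with three differently coloured neighbours; I would prove this by induction, peeling off a fan or an ear of $R$ and using the chord that crosses the peeled diagonal to certify the required third colour. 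Uniqueness is then immediate, as each forced step is deterministic, and it follows that \emph{every} proper $4$-colouring of $R$ is the forced colouring of some seed triangle.

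With the key lemma in hand I would root $\mathcal{T}$, seed the root region with a triangle (one colour assignment up to permutation) and force-colour it. For a child region $R'$ attached by a free chord $\{x,y\}$, the colours of $x,y$ are already fixed, so a common neighbour $z$ of $x,y$ inside $R'$ — which exists by triangulability — has exactly two admissible colours, and each choice forces all of $R'$ uniquely. By the separator property, vertices of $R'$ have no neighbours outside $R'$ other than $x,y$, so the colouring of $R'$ is validated locally and the choices in different subtrees of $\mathcal{T}$ are independent; the binary branching therefore does not multiply across the tree. A rooted dynamic programme then tries the at most two seeds per region, keeps any that yields a proper colouring extending the parent, and rejects only if some region admits none; since every proper $4$-colouring restricts on each reduced subpolygon to a forced colouring of this form, the procedure succeeds exactly when a $4$-colouring exists. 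Finally I would bound the running time: recovering the $O(n)$ regions from the $2$-cut structure is polynomial, each forcing pass is polynomial, and the per-region binary branching contributes only a linear factor, giving an overall polynomial-time algorithm that takes only the graph $G$ as input.
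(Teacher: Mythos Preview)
Your overall architecture matches the paper's: decompose along \emph{free chords} (the paper's ``bottleneck pairs''), observe that the resulting reduced subpolygons interact only through shared edges, colour each reduced piece by forcing from a seed triangle, and glue. You also correctly identify the forcing lemma (your ``main obstacle'') as the crux, and you correctly note that the algorithm runs on $G$ alone while the \emph{proof} that forcing never stalls may use geometry.

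The genuine gap is in your proof of the forcing lemma. ``Peeling off a fan or an ear and using the chord that crosses the peeled diagonal'' is not enough: the crossing chord gives you \emph{one} extra edge, but you need the frontier vertex to see three \emph{distinctly coloured} neighbours, and nothing in an ear-peeling argument controls the colours those neighbours already carry. The paper's argument is considerably more delicate and genuinely geometric. It fixes a triangle $T=t_1t_2t_3$, takes a chord $v_iv_j$ crossing two sides of $T$, and proves (their Lemma~\ref{mainlem}) that every vertex on the four Euclidean shortest paths $\Pi(t_1,v_i),\Pi(t_2,v_i),\Pi(t_2,v_j),\Pi(t_3,v_j)$ is forced, by an inductive case analysis that repeatedly uses tangents to these concave chains and blocking arguments to exhibit a $K_4$ containing the next uncoloured vertex. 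Theorem~\ref{lemalgo} then recurses this on the sub-chains between consecutive coloured vertices. You should expect to need an argument of this shape; a purely combinatorial ear/fan induction is unlikely to close.

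One simplification you are missing: once you have uniqueness of the $4$-colouring of each reduced piece up to permutation, the gluing needs no DP or binary branching at all. A child region $R'$ shares exactly the two endpoints $x,y$ of a free chord with its parent; any of the two permutations of the remaining two colours gives an equally valid proper colouring of $R'$ (they are the same colouring up to the transposition fixing the colours of $x,y$), so you may pick either one. The paper simply says ``permute the colours so there is no conflict''. Your per-region two-way branching is harmless but unnecessary.
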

	
	\subsection{Unique 4-colouring of reduced polygons}
	We first prove that if a reduced polygon is 4-colourable, then the 4-colouring is unique up to
	a permutation of colours. In the coming proof,
	consider a polygon $P$ and its visibility graph $G(V,E)$, embedded on $P$.
	Hereafter we slightly abuse notation by equating $P$ and $G$.
	Since we want to 4-colour $P$, we assume that $G$ has no $K_5$ (or we answer `no').
	We denote the clockwise polygonal chain of $P$ from a vertex $u$ to
	a vertex $v$ as $\Gamma(u,v)$.
	
	One can easily see that it is enough to focus on reduced $P$ in our proofs.
	Indeed, assume an edge $uv$ of $G$ which is a chord of $P$ and not crossed
	by any other chord.
	We can partition $P$ into subpolygons $P_1$ and $P_2$, where
	$P_1=(u\,\Gamma(u,v)\,v)$ and $P_2= (v\,\Gamma(v,u)\,u)$. 
	Since no edge of $G$ has one end in $P_1\setminus P_2$ and the other in 
	$P_2\setminus P_1$, the polygons $P_1$ and $P_2$ can be $4$-coloured separately
	and merged again (provided that $P_1$ and $P_2$ are $4$-colourable).
	
	Let $u$ and $v$ be two vertices of $P$.
	The {\em shortest path} between $u$ and $v$ is a (graph) path from $u$ to
	$v$ in $G$ such that the sum of the Euclidean lengths of its edges is minimized.
	Such a shortest path between $u$ and $v$ is unique in $P$ and is denoted as $\Pi(u,v)$.
	Observe that all non-terminal vertices of a shortest path are non-convex \cite{g-vap-07}.
	We will assume an implicit ordering of vertices on $\Pi(u,v)$ from $u$ to $v$. 
	When we say that some vertex $w$ is the first (or last) 
	vertex on $\Pi(u,v)$ with a certain property, we mean that $w$ precedes 
	(respectively, succeeds) all other vertices with that property on $\Pi(u,v)$.
	
	For a proof of Theorem~\ref{colthm}, we have got the following sequence of claims.
	Consider, in all of them, a $K_5$-free reduced polygon $P$
	and its three vertices $t_1,t_2,t_3$ forming a triangle~$T\subseteq G$.
	Assume that $T$ is already coloured (which is unique up to a permutation of the colours).
	Suppose that $v_i$ is an uncoloured vertex, such that an edge incident to $v_i$ intersects $T$.
	Then we have the following lemmas.
	
	\begin{figure}[htbp]
		\subfloat[]{
			\begin{tikzpicture}[
			diamondd/.style = {inner sep=1.6pt, diamond},
			triangle/.style = {inner sep=1.2pt, regular polygon, regular polygon sides=3},
			dtriangle/.style = {inner sep=1.2pt, regular polygon, regular polygon sides=3, rotate=180}
			]
			\tikzstyle{every node}=[draw, shape=circle, minimum size=3pt, inner sep=2pt];
			\node[thick,fill=white, label=left:$v_i$] (vi) at (0,0) {};
			\node[thick,fill=white, label=$v_j$] (vj) at (9,0) {};
			\node[fill=gray, label=below:$v_p$] (vp) at (2.5, -0.3) {};
			\node[fill=blue,triangle, label=below:$v_a$] (va) at (3,-0.4) {};
			\node[fill=red,diamondd, label=below:$t_1$] (t1) at (4,-1.2) {};
			\node[fill=yellow, label=below:$t_3$] (t3) at (5,-1.2) {};
			\node[fill=red,diamondd, label=below:$v_t$] (vt) at (6.5,-0.25) {};
			\node[fill=blue,triangle, label=$t_2$] (t2) at (4.5,2) {};
			\node[fill=green,dtriangle, label=below:\hbox to
			   0pt{\hspace{-5ex}($v_b$)\hfill}$v_u$] (vu) at  (3.8,1) {};
			
			\draw[thick] (vi)--(vp)--(va)--(t1)--(t3);
			\draw[dash dot,thick] (t3)  to[out=50,in=190] (vt);
			\draw[thick] (vt)--(vj);
			\draw[dash dot,thick] (vj) to[out=170,in=320] (t2);
			\draw[thick] (t2)--(vu)--(vi);
			
			\tikzstyle{every path}=[color=cyan];
			\draw (vi)--(vj);
			
			\draw (vt)--(vp);
			\draw (vt)--(va);
			\draw (vj)--(vu);
			
			\draw (vu)--(vp);
			\draw (vu)--(vt);
			\draw (vu)--(va);
			
			\draw (t2)--(t1);
			\draw (t2)--(t3);
			\end{tikzpicture}
		}
		
		\vspace*{-6ex}\hfill\subfloat[]{
			\begin{tikzpicture}[
			diamondd/.style = {inner sep=1.6pt, diamond},
			triangle/.style = {inner sep=1.2pt, regular polygon, regular polygon sides=3},
			dtriangle/.style = {inner sep=1.2pt, regular polygon, regular polygon sides=3, rotate=180}
			]
			\tikzstyle{every node}=[draw, shape=circle, minimum size=3pt, inner sep=2pt];
			\node[thick,fill=white, label=left:$v_i$] (vi) at (0,0) {};
			\node[thick,fill=white, label=$v_j$] (vj) at (9,0) {};
			\node[fill=gray, label=below:$v_p$] (vp) at (2.5, -0.3) {};
			\node[fill=red,diamondd, label=below:$v_a$] (va) at (3,-0.4) {};
			\node[fill=red,diamondd, label=below:$t_1$] (t1) at (4,-1.2) {};
			\node[fill=yellow, label=below:$t_3$] (t3) at (5,-1.2) {};
			\node[fill=black, label=below:$v_t$] (vt) at (6.5,-0.25) {};
			\node[fill=yellow, label=30:$v_w$] (vw) at (5.3,0.25) {};
			\node[fill=blue,triangle, label=$t_2$] (t2) at (4.5,2) {};
			\node[fill=green,dtriangle, label=below:$v_u$] (vu) at  (3.8,1) {};
			
			\draw[thick] (vi)--(vp)--(va);
			\draw[dash dot,thick] (va) to[out=340,in=120] (t1);
			\draw[thick] (t1)--(t3);
			\draw[dash dot,thick] (t3)  to[out=80,in=180] (vt);
			\draw[thick] (vt)--(vj)--(vw) (t2)--(vu)--(vi);
			\draw[dash dot,thick] (t2)  to[out=290,in=140] (vw);
			
			\draw[dotted] (vt)--(vw)--(vu);
			\tikzstyle{every path}=[color=cyan];
			\draw[->] (vu)--(4.55,1.5)
			  node[draw=none,label=right:~\color{black}($v_c$)] {};
			\draw[->] (vu)--(4.8,1.3)
			  node[draw=none,label=right:~~~~\color{black}($v_r$)] {};
			
			\draw (vi)--(vj);
			
			\draw (vw)--(vp);
			\draw (vw)--(va);
			
			\draw (vu)--(vp);
			\draw (vu)--(va);
			
			\draw (t2)--(t1);
			\draw (t2)--(t3);
			\end{tikzpicture}
		}
		
		\vspace*{-5ex}\subfloat[]{
			\begin{tikzpicture}[
			diamondd/.style = {inner sep=1.6pt, diamond},
			triangle/.style = {inner sep=1.2pt, regular polygon, regular polygon sides=3},
			dtriangle/.style = {inner sep=1.2pt, regular polygon, regular polygon sides=3, rotate=180}
			]
			\tikzstyle{every node}=[draw, shape=circle, minimum size=3pt, inner sep=2pt];
			\node[thick,fill=white, label=left:$v_i$] (vi) at (0,0) {};
			\node[thick,fill=white, label=$v_j$] (vj) at (9,0) {};
			\node[thick,fill=white, label=200:$v_p$] (vp) at (1.8,-0.15) {};
			\node[fill=red,diamondd, label=250:$v_a$] (va) at (2.4,-0.22) {};
			\node[fill=blue,triangle, label=below:\hbox to          
                           0pt{\hspace{-5ex}($v_y$)\hfill}$\!v_z\!\!$] (vz) at (3.4,-0.5) {};
			\node[fill=red,diamondd, label=below:$t_1$] (t1) at (4,-1.2) {};
			\node[fill=yellow, label=below:$t_3$] (t3) at (5,-1.2) {};
			\node[fill=red,diamondd, label=below:$v_d$] (vd) at (5.4,-0.68) {};
			\node[fill=gray, label=-30:$v_s$] (vs) at (5.7,-0.48) {};
			\node[thick,fill=white, label=-30:$v_t$] (vt) at (6.5,-0.25) {};
			\node[fill=blue,triangle, label=$t_2$] (t2) at (4.5,2) {};
			\node[fill=green,dtriangle, label=below:$v_u$] (vu) at  (3.8,1) {};
			
			\draw[dash dot] (va) to[out=345,in=110] (t1);
			\draw[thick] (vi)--(vp)--(va);
			
			\draw[thick] (t1)--(t3);
			\draw[dash dot,thick] (t3)  to[out=70,in=210] (vd);
			\draw[thick] (vd)--(vs);
			\draw[dash dot,thick] (vs)  to[out=30,in=185] (vt);
			\draw[thick] (vt)--(vj);
			\draw[dash dot,thick] (vj)  to[out=170,in=320] (t2);
			\draw (t2)--(vu)--(vi);
			
			\draw[dotted] (vt)--(vu);
			\tikzstyle{every path}=[color=cyan];
			
			\draw (vi)--(vj);
			\draw (vp)--(vt);
			
			\draw (vu)--(vp);
			\draw (vu)--(va);
			\draw (vu)--(vz);
			\draw (vu)--(vt);
			\draw (vu)--(vs);
			\draw (vu)--(vd);
			
			\draw (vz)--(vd);
			\draw (vz)--(vs);
			\draw (vz)--(vt);
			
			\draw (t2)--(t1);
			\draw (t2)--(t3);
			\end{tikzpicture}
		}
		
		\vspace*{-6ex}\hfill\subfloat[]{
			\begin{tikzpicture}[
			diamondd/.style = {inner sep=1.6pt, diamond},
			triangle/.style = {inner sep=1.2pt, regular polygon, regular polygon sides=3},
			dtriangle/.style = {inner sep=1.2pt, regular polygon, regular polygon sides=3, rotate=180}
			]
			\tikzstyle{every node}=[draw, shape=circle, minimum size=3pt, inner sep=2pt];
			\node[thick,fill=white, label=left:$v_i$] (vi) at (0,0) {};
			\node[thick,fill=white, label=$v_j$] (vj) at (9,0) {};
			\node[thick,fill=white, label=200:$v_p$] (vp) at (1.3, -0.2) {};
			\node[fill=blue,triangle, label=270:$v_a$] (va) at
			(2,-0.32) {};
			\node[fill=red,diamondd, label=250:$v_g$] (vg) at
			(2.7,-0.5) {};
			\node[fill=red,diamondd, label=below:$t_1$] (t1) at (4,-1.2) {};
			\node[fill=yellow, label=below:$t_3$] (t3) at (5,-1.2) {};
			\node[fill=black] (vt) at (6.7,-0.3) {};
			\node[fill=black, label=40:$v_t$] (vt') at (6,0.2) {};
			\node[fill=blue,triangle, label=$t_2$] (t2) at (4.5,2.3) {};
			\node[fill=green,dtriangle, label=below:$v_b$] (vb) at  (4,1.5) {};
			\node[fill=black, label=$v_q$] (vq) at  (3.2,0.5) {};
			\node[fill=black] (vq') at  (2.7,0.2) {};
			
			\draw[thick] (vi)--(vp)--(va)--(vg);
			\draw[thick] (t1)--(t3);
			\draw[dash dot,thick] (vg) to[out=340,in=120] (t1);
			\draw[dash dot,thick] (t3)  to[out=80,in=190] (vt);
			\draw[thick] (vt)--(vj)--(vt')--(t2);
			\draw[thick] (t2)--(vb)--(vq)--(vq')--(vi);
			
			\tikzstyle{every path}=[color=cyan];
			\draw (vi)--(vj);
			\draw (t1)--(t2)--(t3);
			\draw (vb)--(vg)--(vq);
			\draw (vb)--(vt')--(vq);

			\end{tikzpicture}
		}
		\caption{Illustration of the proof of Lemma~\ref{mainlem}:
			The vertices with undetermined colours are drawn with white circles.
			The vertices whose colours shall be uniquely determined next, are
			now drawn with gray circles. 
			\\
			(a) $v_p$ forms a $K_4$ with $v_a$, $v_t$ and $v_u$. (b) $v_p$ forms a $K_4$ with $v_a$, $v_u$ and $v_w$.
			(c) $v_s$ forms a $K_4$ with $v_u$, $v_d$ and $v_z$. (d) $v_g$, $v_q$ and $v_b$ form a $K_3$.}
		\label{fig:cases}
	\end{figure}
	
	\begin{lemma} \label{mainlem}
		Assume that two vertices $v_i \in \Gamma(t_1, t_2)$ and $v_j \in \Gamma(t_2, t_3)$ see each other, and the edge $v_iv_j$
		intersects $t_1t_2$ and $t_2 t_3$. Then the colours of all vertices on the four paths
		$\Pi(t_1, v_i)$, $\Pi(t_2, v_i)$, $\Pi(t_2, v_j)$ and $\Pi(t_3, v_j)$, 
		including $v_i,v_j$
		themselves, are uniquely determined by the colours of $T$.
	\end{lemma}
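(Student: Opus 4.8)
The plan is to propagate the colouring of $T$ (unique up to a permutation of the colours) outward along the four shortest paths, using repeatedly a single elementary forcing step. Since every triangle is properly coloured by three \emph{distinct} colours and only four colours are available, an uncoloured vertex $w$ that is adjacent to all three vertices of an already-coloured triangle has its colour forced: it must avoid those three colours, hence it must receive the unique remaining fourth colour (the standing $K_5$-freeness of $G$ is what keeps us in the $4$-colourable regime, so that this forced colour is admissible). The whole task therefore reduces to exhibiting, for each not-yet-coloured vertex on the four paths, an adjacent already-coloured triangle, and to processing the vertices in an order for which such a triangle is always available.

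First I would fix the geometry. Because the chord $v_iv_j$ crosses both $t_1t_2$ and $t_2t_3$, it separates the apex $t_2$ from the opposite edge $t_1t_3$; equivalently $t_2$ lies on one side of the supporting line of $v_iv_j$ while $t_1$ and $t_3$ lie on the other. Consequently the two funnels $\Pi(t_1,v_i)\cup\Pi(t_2,v_i)$ (with base $t_1t_2$ and apex $v_i$) and $\Pi(t_2,v_j)\cup\Pi(t_3,v_j)$ (with base $t_2t_3$ and apex $v_j$), together with the chord $v_iv_j$ and the triangle edges, bound a region of $P$ inside which all the vertices to be coloured lie. I would also record the two structural facts we may use: each of the four paths is \emph{the} shortest path between its endpoints, and every internal (non-terminal) vertex of a shortest path is reflex. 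Reflexivity is the engine of the argument, since it is precisely what guarantees the visibility edges by which a newly reached path vertex still ``sees back'' to a coloured triangle on the inner side of its funnel.

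Next I would run an induction that colours the path vertices one at a time, starting from the coloured triangle $T$ and maintaining the invariant that every processed vertex is uniquely coloured. At each step I select the next uncoloured vertex closest to $T$ and, according to the local configuration of the two funnels relative to the cut-off apex $t_2$, I point to an already-coloured triangle adjacent to it that forces its colour. This is exactly the case analysis illustrated in Figure~\ref{fig:cases}: the forcing triangle is $\{v_a,v_t,v_u\}$ in case~(a), $\{v_a,v_u,v_w\}$ in case~(b), $\{v_u,v_d,v_z\}$ in case~(c), and $\{v_g,v_q,v_b\}$ in case~(d). In each configuration the crossing property of $v_iv_j$ together with the reflexivity of the intervening vertices should supply exactly the three visibility edges that make this triangle adjacent to the vertex being coloured, so the forcing step applies and the invariant is preserved.

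The hard part will be this geometric case analysis: proving that the required adjacent coloured triangle really does exist in each configuration, i.e.\ that the three relevant pairs of vertices are genuinely mutually visible in $P$. This is where one has to reason in detail from the funnel structure, from the separation of $t_2$ by the chord $v_iv_j$, and from the non-convexity of internal shortest-path vertices, and it is the reason the proof must split into the cases of Figure~\ref{fig:cases}. Once each case yields such a triangle, the elementary forcing step determines the next colour uniquely; since the colour chosen at every step is the only admissible one, the resulting colouring of all vertices on the four paths is uniquely determined by the colouring of $T$, and the induction terminates once the two funnels are exhausted at $v_i$ and $v_j$, which completes the proof.
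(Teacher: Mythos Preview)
Your overall plan matches the paper's approach exactly: induct along the four shortest paths, and at each step force the colour of some next vertex by exhibiting an already-coloured triangle adjacent to it. However, what you have written is a plan rather than a proof---you explicitly defer ``the hard part'', namely the geometric case analysis establishing that the required visibility edges exist, and that analysis \emph{is} the entire content of the lemma.

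Two more specific issues show that the plan, as stated, would not go through without correction. First, your identification of the forcing triangle in case~(d) as $\{v_g,v_q,v_b\}$ is wrong: in the paper's notation $v_q$ is precisely the uncoloured successor on $\Pi(t_2,v_i)$ that one is trying to colour, so it cannot belong to its own forcing triangle. The caption of Figure~\ref{fig:cases}(d) merely records that these three vertices form a $K_3$; the actual argument in Case~2 uses $v_g$ and $v_b$ together with a third coloured vertex $v_t$ located on $\Pi(t_2,v_j)$, and when that vertex is unavailable the proof reduces back to Subcases~1.a or~1.b rather than colouring $v_q$ directly. Second, and more structurally, your rule ``select the next uncoloured vertex closest to~$T$'' is not what the induction does and would not work. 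The paper maintains four frontier vertices $v_p,v_q,v_r,v_s$, one on each path, and proves only that \emph{at least one} of them can be advanced; in Subcase~1.a the attempt to colour $v_p$ may fail (because the candidate third vertex $v_w$ is itself still uncoloured), and the argument then switches to colouring $v_r$ instead, while in Subcase~1.b it switches to colouring~$v_s$. This interleaving---advancing whichever path the current partial colouring permits---is essential, and a scheme that fixes the processing order in advance (e.g.\ by distance to~$T$) will get stuck.
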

	
	\begin{proof}
		We prove the claim by induction on the four paths.
		As the base case, the first vertices of these paths are the vertices
		of $T$, which are already assigned different colours.
		
		For the induction step, assume that $\Pi(t_1,v_i)$, $\Pi(t_2,v_i)$,
		$\Pi(t_2,v_j)$ and $\Pi(t_3,v_j)$ have been coloured till vertices $v_a$, $v_b$, $v_c$ and
		$v_d$ respectively.
		Also, their immediate uncoloured successors on $\Pi(t_1,v_i)$, $\Pi(t_2,v_i)$,
		$\Pi(t_2,v_j)$ and $\Pi(t_3,v_j)$ are $v_p$, $v_q$, $v_r$ and $v_s$ respectively.
		We aim to show that the colours of at least one of $v_p$, $v_q$, $v_r$ and $v_s$
		is uniquely determined by the already coloured vertices.
		
		We have the following cases (cf.~Figure~\ref{fig:cases}).
		
		\paragraph*{Case 1: $v_p$ sees $v_b$ or some predecessor of $v_b$ on $\Pi(t_2,v_i)$}
		By definition, $v_p$ is the immediate successor of $v_a$ on $\Pi(t_1,v_i)$,
		so $v_p$ must see $v_a$. The right tangent of $v_a$ to $\Pi(t_2,v_i)$ lies to the right of the
		right tangent of $v_p$ to $\Pi(t_2,v_i)$. So, if the right tangent of $v_p$ to
		$\Pi(t_2,v_i)$
		touches $\Pi(t_2,v_i)$ at a vertex $v_u$, then $v_a$ sees $v_u$. Note that either $v_u = v_b$
		or $v_u$ precedes $v_b$ on $\Pi(t_2,v_i)$. In any case, $v_u$ is already coloured.
		Since $v_p$, $v_a$ and $\Pi(t_3,v_j)$ lie on the same side of $v_iv_j$, and $v_p$ is nearer
		to $v_iv_j$ than $v_a$ is, $v_p$ and $v_a$ see a vertex $v_t$ of $\Pi(t_3,v_j)$.
		If $v_u$ also sees $v_t$, and $v_t$ is already coloured, then the claim is proved
		(Figure~\ref{fig:cases}(a)).
		So we consider the other two cases, namely,
		that $v_u$ does not see $v_t$, or that $v_t$ is not yet coloured.
		
		\paragraph{Subcase 1.a:  $v_u$ does not see $v_t$} 
		Since $v_t$ and $v_u$ lie on different sides of
		$\overline{v_iv_j}$ and of $\overline{t_2t_3}$,
		some vertex of $\Pi(t_2,v_j)$ must be blocking $v_u$ and $v_t$.
		Let $v_w$ be the first vertex of $\Pi(t_2,v_j)$ blocking $v_u$ and $v_t$.
		Then $v_u$ sees $v_w$. The vertex $v_w$ is closer to $\overline{v_iv_j}$ 
		than $v_u$ is. Also, $v_w$ lies to the right of $\overrightarrow{v_av_u}$ and $\overrightarrow{v_pv_u}$,
		and to the left of $\overrightarrow{v_av_t}$ and $\overrightarrow{v_pv_t}$. 
		Then the only possible blockers between $v_w$ and $v_p$ or $v_a$ can be from $\Pi(t_2,v_i)$.
		But all the vertices on $\Pi(t_2,v_i)$ preceding $v_u$ are farther from $v_iv_j$ than $v_u$ is.
		So, there can be no such blocker, and $v_w$ must be visible from both $v_a$ and $v_p$ (Figure~\ref{fig:cases}(b)).
		If $v_w$ is already coloured, then the claim is proved. 
		Suppose that $v_w$ is not yet coloured.
		Then consider $v_r$, which now precedes $v_w$ on $\Pi(t_2,v_j)$. The vertices $v_r$ and $v_c$ are 
		consecutive on $\Pi(t_2,v_j)$ and hence see each other.
		Since $\Pi(t_2,v_j)$ and $\Pi(t_1,v_i)$ are on opposite sides of $v_iv_j$, the vertices $v_c$ and $v_r$  
		both see $v_a$ or some vertex preceding $v_a$ on $\Pi(t_1,v_i)$. Let $v_x$ be the last coloured vertex
		of $\Pi(t_1,v_i)$ seen by both $v_c$ and $v_r$. If $v_x \neq v_a$ then let $v_y$ be the last vertex of 
		$\Pi(t_2,v_i)$ that blocks $v_c$ from the successor of $v_y$ on $\Pi(t_1,v_i)$. Then $v_y$ must
		be visible from $v_x$, $v_r$ and $v_c$. Since $v_x$ precedes $v_a$ on $\Pi(t_1,v_i)$, and $v_y$ precedes $v_b$
		on $\Pi(t_2,v_i)$, both $v_x$ and $v_y$ must be already coloured. So, $T$ uniquely determines the colour of $v_r$.
		If $v_x=v_a$ then since $v_u$ is on the right tangent of $v_a$ to $\Pi(t_2,v_i)$, both $v_c$ and $v_r$ see $v_u$.
		Hence, $T$ uniquely determines the colour of $v_r$. Now we move to the
		second subcase.
		
		\paragraph{Subcase 1.b: $v_u$ sees $v_t$, but $v_t$ is not yet coloured}
		Since $v_u$ sees $v_t$, $\Pi(t_2,v_j)$ is a concave chain and the edge $t_1t_3$ exists in $P$,
		$v_u$ must see every predecessor of $v_t$ on $\Pi(t_2,v_j)$.
		This means that both $v_d$ and $v_s$ see $v_u$ (Figure~\ref{fig:cases}(c)). 
		Let the right tangent from $v_d$ touches $\Pi(t_1,v_i)$ in a vertex~$v_y$.
		Then $v_s$ must see $v_y$, because the last vertices $v_i$ and $v_j$ of
		concave chains $\Pi(t_1,v_i)$ and $\Pi(t_3,v_j)$ see each  other. 
		Also, the left tangent of $v_u$ to $\Pi(t_1,v_i)$ must touch $\Pi(t_1,v_i)$ at a vertex equal to or preceding $v_y$.
		Thus, all three of $v_s$, $v_d$ and $v_u$ see a common vertex $v_z$ on
		$\Pi(t_1,v_i)$ which precedes $v_a$,
		since $v_u$ and $v_t$ see $v_a$. Thus, $v_z$ is already coloured, and $v_u$, $v_d$ and $v_z$ form a $K_4$ with $v_s$
		and uniquely determine the colour of $v_s$.

		\paragraph{Case 2: $v_p$ does not see $v_b$ or any predecessor of $v_b$ on $\Pi(t_2,v_i)$}
		Since $\Pi(t_2,v_i)$ is a concave chain, this means that the tangent drawn from $v_p$ to $\Pi(t_2,v_i)$ in the direction
		of $t_2$, has whole $\Pi(t_2,v_b)$ to its left (refer to Figure~\ref{fig:cases}(d)).
		Suppose that $v_b$ does not see $v_a$ or some other vertex of $\Pi(t_1,v_a)$.
		Since also $\Pi(t_1,v_i)$ is a concave chain, and $t_1$ sees $t_2$,
		all blockers between $v_q$ and $\Pi(t_1,v_a)$ must come from $\Pi(v_p,v_i)$, and must include $v_p$.
		But then, the aforementioned tangent drawn from $v_p$ to $\Pi(t_2,v_i)$
		must have at least part of $\Pi(t_2,v_b)$ to the right, which is absurd.

		So, $v_b$ must see $v_a$ or some other vertex of $\Pi(t_1,v_a)$.
		Let $v_g$ denote the last vertex of $\Pi(t_1,v_i)$ seen by $v_b$.
		Then $v_g$ exists, it belongs to $\Pi(t_1,v_a)$ since
		$v_p$ (the successor of $v_a$) does not see~$v_b$, 
		and $v_g$ is seen by~$v_q$ (Figure \ref{fig:cases}(d)).
		Since the vertex $v_g$ is on $\Pi(t_1,v_a)$, it is already coloured.

		Let us now similarly consider a vertex, say $v_t$ on $\Pi(t_2, v_j)$,
		which is seen by both $v_b$ and $v_q$.
		Suppose that $v_g$ or other common coloured neighbour of $v_b$ and $v_q$ sees $v_t$.
		Then we are immediately done if $v_t$ is coloured, or we are
		in Subcase 1.b if $v_t$ is uncoloured.
		Otherwise, some vertex on $\Pi(t_3, v_j)$ blocks all visibilities between $v_t$
		and all the common neighbours of $v_q$ and $v_b$. 
		Then we finish as in Subcase 1.a.
	\end{proof}
	
	\begin{corollary} \label{maincor}
		If any vertex $v_i$ of $P$ sees a vertex of $T$ and their visibility edge crosses one of the edges of $T$,
		then the colour of $v_i$ is uniquely determined by the colours of $T$.
	\end{corollary}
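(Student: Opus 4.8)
The plan is to deduce this directly from Lemma~\ref{mainlem} by letting one endpoint of the crossing edge be a vertex of $T$ itself. First I would normalise the configuration. Since $v_i$ lies on the boundary of $P$, it belongs to exactly one of the three arcs $\Gamma(t_1,t_2)$, $\Gamma(t_2,t_3)$, $\Gamma(t_3,t_1)$; after a cyclic relabelling of $T$ we may assume $v_i\in\Gamma(t_1,t_2)$. The region cut off between this arc and the chord $t_1t_2$ is a pocket whose only side shared with the convex triangle $T$ is $t_1t_2$. Hence any straight segment from $v_i$ reaching a vertex of $T$ must first cross $t_1t_2$, and a segment that properly crosses $t_1t_2$ and ends at a corner of $T$ can only end at the opposite corner $t_3$: it cannot end at $t_1$ or $t_2$, since a segment to either of these meets $t_1t_2$ only at their shared endpoint rather than in its interior. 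Thus the single configuration compatible with the hypothesis is that $v_i$ sees $t_3$ and the edge $v_it_3$ crosses $t_1t_2$.

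Next I would invoke Lemma~\ref{mainlem} with the degenerate choice $v_j:=t_3$. Here $t_3$ is the terminal vertex of $\Gamma(t_2,t_3)$, the edge $v_it_3$ crosses $t_1t_2$, and it meets $t_2t_3$ at its endpoint $t_3$, so the lemma's hypotheses hold in this limiting case. Under this choice the four shortest paths of the lemma specialise as follows: $\Pi(t_3,v_j)=\{t_3\}$ is a single, already coloured vertex, and $\Pi(t_2,v_j)=\Pi(t_2,t_3)$ is merely the triangle edge $t_2t_3$, both of whose endpoints carry colours of $T$. The remaining two paths $\Pi(t_1,v_i)$ and $\Pi(t_2,v_i)$ are precisely those whose colours the induction of Lemma~\ref{mainlem} forces from the colours of $T$, up to and including their common terminal vertex $v_i$. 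Consequently the colour of $v_i$ is uniquely determined by the colours of $T$, as claimed.

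The one point needing care is whether the inductive case analysis of Lemma~\ref{mainlem} survives the degeneration $v_j=t_3$, where $\Pi(t_2,v_j)$ has collapsed to the edge $t_2t_3$ and $\Pi(t_3,v_j)$ to a point. I expect no genuine obstacle here: every step of Cases~1--2 and Subcases~1.a--1.b only uses that $t_2t_3$ is an edge of $P$ and that the relevant chains are concave, and the collapsed paths merely render some of the blocking subcases vacuous. The main obstacle is therefore purely bookkeeping -- confirming that no step of the lemma secretly relied on $v_j$ being distinct from the corners of $T$ -- rather than any new geometric difficulty.
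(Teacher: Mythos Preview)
Your approach is essentially identical to the paper's: both proofs normalise by a relabelling of $T$, set $v_j$ equal to the corner of $T$ that $v_i$ sees, observe that two of the four shortest paths in Lemma~\ref{mainlem} then degenerate to a single edge and a single point, and conclude directly from the lemma. Your version is slightly more explicit than the paper's (which just asserts the WLOG and the degeneration in one line), in that you justify geometrically why the visible corner must be the one opposite the arc containing $v_i$, and you flag the need to check that the inductive case analysis survives the collapse of $\Pi(t_2,v_j)$ and $\Pi(t_3,v_j)$; but there is no substantive difference in the argument.
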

	\begin{proof}
		Without loss of generality, suppose that $v_i$ sees $t_1$, and $v_it_1$ crosses $t_2t_3$. Then $v_j=t_1$, $\Pi(t_2,v_j) = t_2t_1$
		and $\Pi(t_1,v_j) =t_1$, and Lemma~\ref{mainlem} proves the claim.
	\end{proof}

	\begin{figure}
		\centering
		\begin{tikzpicture}[scale=3,
			diamondd/.style = {inner sep=1.6pt, diamond},
			triangle/.style = {inner sep=1.2pt, regular polygon, regular polygon sides=3},
			dtriangle/.style = {inner sep=1.2pt, regular polygon, regular polygon sides=3, rotate=180}
		]
		\tikzstyle{every node}=[draw, shape=circle, minimum size=3pt, inner sep=2pt];
		\node[fill=red,diamondd, label=left:$t_3$] (t1) at (0,-0.1) {};
		\node[fill=blue,triangle, label=$t_1$] (t2) at (0.5, 0.7) {};
		\node[fill=yellow, label=right:$t_2$] (t3) at (1, -0.1) {};
		\node[fill=blue,triangle, label=$v_a$] (va) at (1.7,0.53) {};
		\node[fill=gray, label=$v_u$\hbox to 0pt{~($v_z$)\hspace*{-2em}}] (vu) at (2.1,0.85) {};
		\node[fill=green,dtriangle, label=below:$v_b$] (vb) at
		(2.9,0.55) {};
		\node[fill=yellow, label=right:$v_w$] (vd) at (2.9,0.45) {};
		\node[fill=blue,triangle, label=$v_i$] (vi) at (3.7,0.6) {};
		
		\draw[thick] (t2)--(t1)--(t3);
		\draw (vu)--(va);
		\draw (vu)--(vb);
		\draw (va)--(vb)--(vd);
		\draw (va)--(vd);
		\draw (vb)--(vd);
		\draw[dash dot] (t2) to[out=340,in=180] (va);
		\draw[dash dot] (vb)--(vi);	
		\draw[dash dot] (t3) to[out=20,in=190] (vi);	
		\tikzstyle{every path}=[color=cyan];
		\draw[thick] (0.4,0.2)--(vi);
		\draw[thick] (t2)--(t3);
		\draw[thick] (vu)--(vd);
		\end{tikzpicture}
		
		\begin{tikzpicture}[scale=3,
			diamondd/.style = {inner sep=1.6pt, diamond},
			triangle/.style = {inner sep=1.2pt, regular polygon, regular polygon sides=3},
			dtriangle/.style = {inner sep=1.2pt, regular polygon, regular polygon sides=3, rotate=180}
		]
		\tikzstyle{every node}=[draw, shape=circle, minimum size=3pt, inner sep=2pt];
		\node[fill=red,diamondd, label=left:$t_3$] (t1) at (0,-0.1) {};
		\node[fill=blue,triangle, label=$t_1$] (t2) at (0.5, 0.7) {};
		\node[fill=yellow, label=right:$t_2$] (t3) at (1, -0.1) {};
		\node[fill=blue,triangle, label=$v_a$] (va) at (1.7,0.53) {};
		\node[fill=gray, label=$v_u$] (vu) at (1.9,0.85) {};
		\node[fill=green,dtriangle, label=below:$v_b$] (vb) at (2.9,0.55) {};
		\node[fill=yellow, label=below:$v_c$] (vc) at (1.3,0.06) {};
		\node[fill=red,diamondd, label=below:$v_d$] (vd) at (3.2,0.5) {};
		\node[fill=white] (vbl) at  (2.7,0) {};
		\node[fill=blue,triangle, label=$v_i$] (vi) at (3.7,0.6) {};
		
		\draw[thick] (t2)--(t1)--(t3);
		\draw (vu)--(va);
		\draw (vu)--(vb);
		\draw (va)--(vb)--(vd);
		\draw (va)--(vc)--(vb);
		\draw (va)--(vd);
		\draw (vb)--(vd);
		\draw (vd)--(vc);
		\draw[dash dot] (t2) to[out=340,in=180] (va);
		\draw[dash dot] (vb)--(vi);	
		\draw[dash dot] (t3) to[out=50,in=190] (vc);	
		\draw[dash dot] (vd)--(vi);	
		\tikzstyle{every path}=[color=cyan];
		\draw[thick] (0.4,0.2)--(vi);
		\draw[thick] (t2)--(t3);
		\draw[thick] (vu)--(vbl);
		\end{tikzpicture}
		
		\caption{Illustration of the proof of Theorem~\ref{lemalgo}.
		Top: an edge incident to $v_u$ sees a coloured vertex on $\Pi(t_2, v_i)$.
		Bottom: an edge incident to $v_u$ crosses an edge $v_cv_d$ of $\Pi(t_2, v_i)$.
		Both cases can be resolved by an application of Lemma
		\ref{mainlem} and Corollary \ref{maincor} to the already
		coloured triangles $v_av_bv_w$ and $v_av_bv_c$, respectively.}
		\label{fig:algo}
	\end{figure}
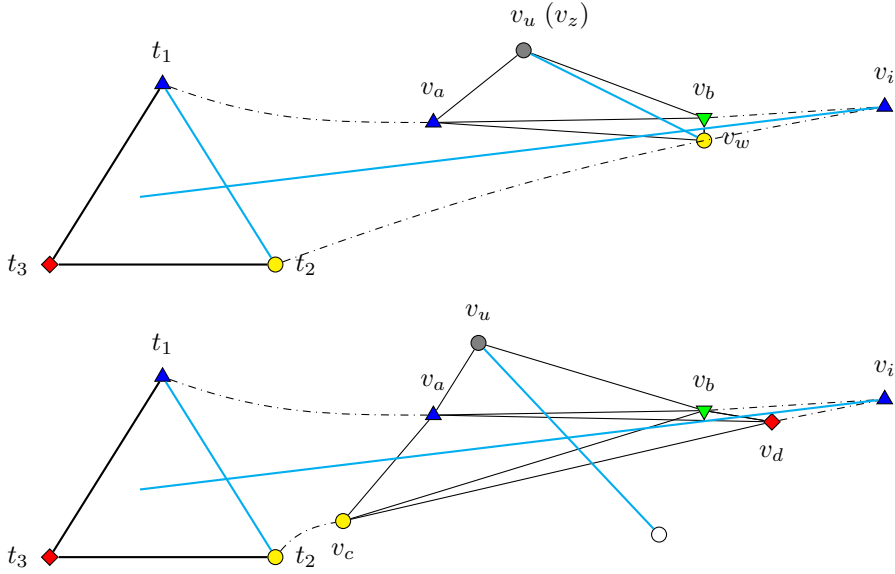

	\begin{theorem} \label{lemalgo}
		If a reduced polygon is 4-colourable, then it has a unique
		4-colour\-ing up to a permutation of colours.
	\end{theorem}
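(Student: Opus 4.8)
The plan is to reduce the uniqueness claim to a \emph{forcing} statement: once a single triangle $T=t_1t_2t_3$ of $G$ has been coloured with three distinct colours, the colour of every remaining vertex of $P$ is uniquely determined. Granting this, if $c_1$ and $c_2$ are two $4$-colourings then, letting $\pi$ be the colour permutation sending $c_1|_T$ to $c_2|_T$ (well defined, as both restrictions use three distinct colours on the same three vertices), forcing propagates the identity $c_2=\pi\circ c_1$ from $T$ to all of $P$, which is exactly uniqueness up to a permutation of colours. It therefore suffices to colour one triangle and show that the forced-colouring process of Lemma~\ref{mainlem} and Corollary~\ref{maincor} reaches every vertex.

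First I would fix the coloured triangle $T$ and launch the process. Since $P$ is reduced, any edge of $T$ that is a chord is crossed by another chord; taking such a chord as the edge $v_iv_j$ of Lemma~\ref{mainlem} (or invoking Corollary~\ref{maincor} when the crossing edge is incident to a vertex of $T$) forces the colours of all vertices on the four shortest paths $\Pi(t_1,v_i)$, $\Pi(t_2,v_i)$, $\Pi(t_2,v_j)$ and $\Pi(t_3,v_j)$, producing an initial coloured patch bounded by these concave chains and by $v_iv_j$. The inductive engine is the extension step shown in Figure~\ref{fig:algo}: for a coloured frontier vertex $v_u$ that still sees into the uncoloured region, either an edge at $v_u$ reaches an already-coloured vertex $v_w$ on the bounding path $\Pi(t_2,v_i)$ (top), or such an edge crosses an edge $v_cv_d$ of that path (bottom). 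In each case one exhibits an already-coloured triangle near $v_u$ --- namely $v_av_bv_w$, respectively $v_av_bv_c$ --- and reapplies Lemma~\ref{mainlem} and Corollary~\ref{maincor} with that triangle in place of $T$, forcing the colour of the next frontier vertex; the coloured patch thus grows monotonically.

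The hard part will be \emph{completeness}: arguing that the propagation cannot stall while uncoloured vertices remain, and this is exactly where reducedness is indispensable. If the coloured part were sealed off from an uncoloured part, the dividing diagonal would be a chord of $P$ and hence, by reducedness, crossed by another chord; that crossing chord has an endpoint in the uncoloured region and supplies precisely the visibility edge needed to set up a further application of Lemma~\ref{mainlem} or Corollary~\ref{maincor}, pushing the colouring across the divide. The remaining technical burden is to confirm, through the case analysis of Figure~\ref{fig:algo}, that the appropriate already-coloured triangle is always available so that the hypotheses of the lemma are met. Once this is verified, every step strictly enlarges the coloured region, the process halts only when all of $V(P)$ is coloured, and so the colour of each vertex is forced by $T$; the theorem follows.
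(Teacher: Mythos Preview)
Your proposal is correct and follows essentially the same route as the paper: fix a coloured triangle $T$, use reducedness to obtain a crossing chord and apply Lemma~\ref{mainlem}/Corollary~\ref{maincor} to force the colours along the bounding shortest paths, then recurse on each residual arc $\Gamma(v_a,v_b)$ between consecutive coloured path-vertices using the case analysis of Figure~\ref{fig:algo}. One small slip worth fixing: in the extension step, $v_u$ is the \emph{uncoloured} vertex inside $\Gamma(v_a,v_b)$ (not a coloured frontier vertex) --- reducedness guarantees a chord at $v_u$ crossing $v_av_b$, and it is this chord that either hits or crosses $\Pi(t_2,v_i)$ and thereby furnishes the already-coloured base triangle $v_av_bv_w$ or $v_av_bv_c$ for the next application of Lemma~\ref{mainlem}.
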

	\begin{proof}
		Consider a triangle $T$ in a reduced polygon $P$. 
		If $P$ is not just $T$, then at least one edge of $T$ is not a boundary edge of $P$.
		Without loss of generality, let $t_1t_2$ be such an edge. Since $P$ is reduced, there must be a vertex
		$v_i$ on the boundary chain $\Gamma(t_1, t_2)$ 
		such that an edge incident to $v_i$ crosses $t_1 t_2$.
		By Lemma \ref{mainlem} and Corollary \ref{maincor}, if $P$ is 4-colourable, then 
		all vertices on the shortest paths $\Pi(t_1, v_i)$ and $\Pi(t_2, v_i)$, including
		$v_i$, have a 4-colouring uniquely determined by $T$.
		In case $t_2 t_3$ or $t_3 t_1$ are not boundary edges of $P$, we can similarly find $v_j$ on
		$\Gamma(t_2, t_3)$ and $v_k$ on $\Gamma(t_3, t_1)$ and uniquely 4-colour $\Pi(t_2, v_j)$,
		$\Pi(t_3, v_j)$, $\Pi(t_3, v_k)$ and $\Pi(t_1, v_k)$. 

		Now, all the remaining uncoloured vertices of $P$ are on
		boundary chains of the form $\Gamma(v_a, v_b)$,
		where $v_a$ and $v_b$ are two consecutive vertices in one of the six paths mentioned above. Furthermore,
		no vertex in the polygonal chain $\Gamma(v_a, v_b)$, other than $v_a$ and $v_b$, is coloured.
		Without loss of generality, let $v_a$ and $v_b$ be two consecutive vertices on $\Pi(t_1, t_2)$.
		If $v_a v_b$ is not a boundary edge of $P$, then since $P$ is reduced, there must be an
		uncoloured vertex $v_u$ in $\Gamma(v_a, v_b)$ such that an edge incident to $v_u$ crosses $v_a v_b$.
		This edge is either incident to a vertex of $\Pi(t_2, v_i)$, or crosses an edge of
		$\Pi(t_2, v_i)$.

		Consider the case where such an edge from $v_u$ to a vertex of $\Pi(t_2, v_i)$ exists.
		Then consider a vertex $v_w$ that is closest to	$\overline{v_av_b}$ among all the vertices
		of $\Pi(t_2, v_i)$ that see an internal vertex (say, $v_z$) of $\Gamma(v_a, v_b)$
		(top of Figure~\ref{fig:algo}). Since the edge $v_wv_z$ exists,
		$v_w$ cannot be blocked by any vertex of $\Pi(t_1, v_i)$. Due to the choice of $v_w$, no vertex of 
		$\Pi(t_2, v_i)$ can block $v_w$ from $v_a$ or $v_b$. So, $v_w$ sees both $v_a$ and $v_b$.
		Then, based on the triangle $v_a v_b v_w$,
		Lemma \ref{mainlem} and Corollary \ref{maincor} can be used to 
		uniquely determine a $4$-colouring for $\Pi (v_a, v_z)$ and $\Pi(v_b,v_z)$.

		Now consider the case in which $v_u$ does not see any vertex of $\Pi(t_2, v_i)$,
		but an edge incident to $v_u$ crosses an edge $v_cv_d$ of 
		$\Pi(t_2, v_i)$, where $v_c$ precedes $v_d$ (bottom of Figure~\ref{fig:algo}).
		Then $v_c$ (as well as $v_d$) must see both $v_a$ and $v_b$,
		since there cannot be any blockers in $\Gamma(v_a, v_b)$ or
		$\Gamma(v_c, v_d)$ which are not on $\Pi(t_1, v_i)$ or $\Pi(t_2, v_i)$.
		Again, based on the triangle $v_a v_b v_c$,
		Lemma \ref{mainlem} can be used to uniquely determine a $4$-colouring 
		for $\Pi (v_a, v_u)$ and $\Pi(v_b,v_u)$.

		Now we recurse the above procedure. Let $T_1 = \{T\}$, and let 
		$S_1 = \{\Pi(t_1, v_i),$ $\Pi(t_2, v_i), \Pi(t_2, v_j), \Pi(t_3, v_j), \Pi(t_3, v_k), \Pi(t_1, v_k)\}$.
		Note that we have assumed that none of the edges of $T$ are boundary edges. 
		If some edges of $T$ are boundary edges then $S_1$ will have less elements.
		By the above procedure, we can uniquely 4-colour all vertices of all
		paths of $S_1$. Then, all the uncoloured vertices $v_u$ lie on $\Gamma(v_a, v_b)$,
		where $v_a$ and $v_b$ are consecutive vertices of some path of $S_1$. 
		For each such $v_a v_b$, we find a new triangle based on $v_a v_b$ as above,
		and two new paths of the form $\Pi (v_a, v_u)$ and $\Pi(v_b, v_u)$.
		Let $T_2$ denote the set of all such new triangles, and $S_2$ denote the set of all
		newly coloured shortest paths obtained this way. 
		In general, following the same method
		we can always construct $T_{i+1}$ and $S_{i+1}$ from $T_i$ and $S_i$, until all vertices of $P$ are coloured. Since in each step, the colours of vertices
		are uniquely determined, it follows that if $P$ has a 4-colouring, then it must be unique.
	\end{proof}
\smallskip

	\subsection{Computing a 4-colouring without polygonal representation}
	
	In the previous section, we have proved that if a reduced polygon is 4-colourable, then its 4-colouring must be unique up to permutations.
	Now we use the property to derive a polynomial time 4-colouring algorithms for the visibility graph of a polygon, even when the polygonal embedding
	or boundary are not given.
	First we need to define a few structures and operations.

	\begin{definition}
		Call a pair of adjacent vertices whose removal disconnects
		a given graph $G$, a \emph{bottleneck pair}.
		Consider removing all the bottleneck pairs from~$G$. We are left with connected components of $G$. 
		Now, consider any bottleneck pair $(x,y)$. Suppose that $x$ and $y$ were earlier adjacent to a set of vertices $S_x$ and $S_y$ of a connected component $C_i$.
		Then create a copy of $(x,y)$ and re-connect them with edges with the vertices of $S_x$ and $S_y$ respectively. Do this with every bottleneck pair of $G$.
		Call the subgraphs of $G$ so formed as \emph{reduced subgraphs} of $G$.
	\end{definition}
	We have the following lemma.

	\begin{lemma} \label{lem:bottleneck}
		Let $G$ be the visibility graph of a polygon~$P$.
		Each bottleneck pair of $G$ corresponds to an internal edge of $P$ that is not intersected by any other internal edge of $P$, and vice versa.
	\end{lemma}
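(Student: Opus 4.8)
The plan is to exploit the fact that the boundary of $P$ is a Hamiltonian cycle of $G$ (every boundary edge of $P$ is a visibility edge, so consecutive boundary vertices are always adjacent in $G$), and to translate the graph-theoretic notion of a bottleneck pair into the geometric notion of an uncrossed chord through this cycle. I would prove the two implications separately.

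For the direction ``uncrossed chord $\Rightarrow$ bottleneck pair'', let $xy$ be a chord of $P$ not crossed by any other chord. Since $x$ and $y$ are mutually visible, $xy\in E(G)$, so they form a pair of adjacent vertices. As $xy$ is an internal diagonal rather than a boundary edge, $x$ and $y$ are non-consecutive on the boundary, so each of the two open boundary chains $\Gamma(x,y)$ and $\Gamma(y,x)$ contains at least one internal vertex. Writing $P_1=(x\,\Gamma(x,y)\,y)$ and $P_2=(y\,\Gamma(y,x)\,x)$ exactly as in the partition remark preceding the lemma, both $V(P_1)\setminus\{x,y\}$ and $V(P_2)\setminus\{x,y\}$ are nonempty, and that remark already asserts that no edge of $G$ joins the two sides. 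Hence $G-\{x,y\}$ is disconnected and $(x,y)$ is a bottleneck pair.

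For the converse, let $(x,y)$ be a bottleneck pair; then $xy\in E(G)$, so the segment $\overline{xy}$ lies in $P$. First I would rule out that $xy$ is a boundary edge: removing two consecutive vertices of the boundary cycle leaves a spanning path of $G$ on the remaining $n-2$ vertices, which is connected, contradicting that $(x,y)$ disconnects $G$. Thus $xy$ is a chord. Deleting $x$ and $y$ from the boundary cycle splits it into the two chains $I_1=\Gamma(x,y)\setminus\{x,y\}$ and $I_2=\Gamma(y,x)\setminus\{x,y\}$, each of which is connected in $G-\{x,y\}$ through its boundary edges. Since $I_1\cup I_2=V(G)\setminus\{x,y\}$ and each $I_j$ lies inside a single connected component, disconnectedness forces $I_1$ and $I_2$ to be precisely the two components, with no $G$-edge between them. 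Finally, any chord $ab$ crossing $xy$ would have its endpoints on opposite open chains, i.e.\ $a\in I_1$ and $b\in I_2$, so $ab$ would be an edge joining the two components, which is impossible; therefore $xy$ is uncrossed.

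The step I expect to be the main obstacle is the one that converts ``disconnects'' into the clean statement about the two boundary chains, together with the underlying geometric fact that a visibility edge between a vertex of $I_1$ and a vertex of $I_2$ must cross $\overline{xy}$ (this is exactly the content of the partition remark, which I may cite). The only delicate point there is a possible degeneracy in which $\overline{ab}$ passes through $x$ or $y$ rather than crossing the relative interior of $\overline{xy}$; I would dispose of it via the standard general-position assumption that no three polygon vertices are collinear, or by observing that such a segment still separates the two chains and hence yields the required contradiction.
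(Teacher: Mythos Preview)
Your proof is correct and follows essentially the same approach as the paper: both directions hinge on the boundary Hamiltonian cycle, the partition of $P$ into two subpolygons by the chord $xy$, and the observation that any visibility edge between the two sides would be a chord crossing $xy$. Your write-up is in fact more explicit than the paper's (which is quite terse), particularly in spelling out why removing a boundary edge cannot disconnect $G$ and why the two open chains $I_1,I_2$ must coincide with the two components; the paper states these points but does not argue them in detail.
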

	
	\begin{proof}
		We use the same notations for the vertices of $G$ and their corresponding vertices of $P$.
		Consider any internal edge $xy$ of $P$ such that no other internal edge of $P$ intersects it. 
		Then disconnecting the edge $xy$ and the vertices $x$ and $y$ disconnects $G$.
		So $(x,y)$ is a bottleneck pair.
		Conversely, suppose that $(x,y)$ is a bottleneck pair.
		Then $xy$ is an internal edge of $P$ since deleting a boundary edge does not disconnect~$G$.
		Let $P_1$ and $P_2$ be the two subpolygons of $P$ divided by $xy$.
		If there was a visibility edge from $P_1$ to $P_2$ not
		incident to $x,y$, then since the visibility graphs of $P_1$
		and of $P_2$ are connected, deleting $xy$ would again not disconnect~$G$.
		So, is an internal edge of $P$ not intersected by any other internal edge of $P$.
	\end{proof}
	
	The corollary below follows immediately from Lemma~\ref{lem:bottleneck}.
	
	\begin{corollary}
		Each reduced subgraph of $G$ is the visibility graph of some reduced subpolygon of $P$.
		Likewise, each reduced subpolygon of $P$ has a reduced subgraph of $G$ as its visibility graph.
	\end{corollary}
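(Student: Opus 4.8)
The plan is to transport the graph operation defining reduced subgraphs, through the correspondence of Lemma~\ref{lem:bottleneck}, into the geometric operation of cutting $P$ along its non-crossed chords. First I would recall the decomposition observed at the start of this section: if $xy$ is a chord of $P$ not crossed by any other chord, then $P$ splits into $P_1 = (x\,\Gamma(x,y)\,y)$ and $P_2 = (y\,\Gamma(y,x)\,x)$, with no visibility edge joining $P_1\setminus\{x,y\}$ to $P_2\setminus\{x,y\}$. By Lemma~\ref{lem:bottleneck}, these non-crossed chords are exactly the bottleneck pairs of $G$. Cutting $P$ simultaneously along all of them yields subpolygons $P_1,\dots,P_m$, and I claim each $P_i$ is reduced. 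Indeed, any chord of $P_i$ is also a chord of $P$ that was not among the cuts (the cuts became boundary edges of the $P_i$), hence it is crossed by some other chord of $P$, and that crossing chord necessarily lies in the same piece $P_i$; so every chord of $P_i$ is crossed, i.e.\ $P_i$ is reduced.

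Next I would match the two constructions vertex by vertex and edge by edge. Deleting all bottleneck vertices from $G$ leaves connected components, and I would argue these correspond one-to-one to the pieces $P_i$: every visibility edge running between two distinct pieces is incident to a cut vertex, so no component spans two pieces. The bottleneck pairs bounding $P_i$ are precisely the cut chords on its boundary, and re-attaching a copy of each such pair $(x,y)$ to the component $C_i$, joined to $S_x\cap C_i$ and $S_y\cap C_i$, restores exactly the vertices of $P_i$ together with their visibility edges inside $P_i$: within $P_i$ a cut vertex $x$ sees precisely the interior vertices collected in $S_x$, and the two endpoints of each cut see each other. Hence the reduced subgraph built on $C_i$ is the visibility graph of the reduced subpolygon~$P_i$, giving the first direction of the corollary.

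For the converse I would argue that every reduced subpolygon $Q$ of $P$ arises as one of the pieces $P_i$. Being reduced, $Q$ has all its chords crossed, so its interior contains none of the non-crossed chords of $P$, meaning the cutting procedure never subdivides $Q$; conversely $Q$ is maximal with this property, so it coincides with some $P_i$. Its visibility graph is then the corresponding reduced subgraph, by the identification established above.

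The step I expect to require the most care is the middle identification. Concretely, I must confirm that deleting all bottleneck vertices does not split a single piece's interior into several components (so that components and pieces genuinely correspond one-to-one), and that when several cut chords bound the same piece $P_i$, the independently re-attached copies of the bottleneck pairs produce exactly the visibility edges of $P_i$ — no spurious adjacency and no omission. In particular I would have to verify that two cut chords on $\partial P_i$ contribute mutually adjacent endpoints precisely when the corresponding polygon vertices see each other across $P_i$, and that each bottleneck vertex's neighbour set restricted to $C_i$ is exactly its set of $P_i$-visible interior vertices.
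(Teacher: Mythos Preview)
Your approach is essentially the one the paper has in mind: the paper gives no proof at all beyond the remark that the corollary ``follows immediately from Lemma~\ref{lem:bottleneck}''. Your plan --- transport the bottleneck-pair deletion through Lemma~\ref{lem:bottleneck} to the geometric operation of cutting $P$ along all non-crossed chords, observe that the resulting pieces are exactly the reduced subpolygons, and then match pieces with reduced subgraphs --- is the natural unpacking of that one-line remark, and it is correct.

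You are right to flag the middle identification as the step needing care, and in fact your caution exceeds the paper's. Your specific worry about adjacencies between endpoints of \emph{distinct} bottleneck chords bounding the same piece $P_i$ is a genuine subtlety that the paper's informal definition of ``reduced subgraph'' glosses over: as literally stated, each copied pair $(x,y)$ is reconnected only to vertices of the surviving component $C_i$, not to copies of other bottleneck pairs, so a visibility edge between two such endpoints could in principle go missing. This does not affect the algorithm (uniqueness of the $4$-colouring and the colouring procedure still go through piece by piece), but your instinct to pin it down is sound and is more than the paper itself verifies.
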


	\begin{algorithm}[t]
		\caption{4-colourablity of visibility graphs of simple polygons}
		\label{alg:4colouring}
		\hrule\medskip
		\KwIn{A graph $G$ with the promise of being the visibility graph of
			a simple polygon}
		\KwOut{Whether $G$ is $4$-colourable or not. If so, then a proper $4$-colouring of $G$.}
		\smallskip
		Identify all edges $uv$ of $G$, such that removal of $u$ and $v$ disconnects $G$

		Delete all these bottleneck pairs (i.e.,~$u,v$) and partition $G$ into connected components $G_1, G_2, \ldots,
		G_k$. To each connected component of $G$, add copies of the bottleneck pairs
		which were originally attached to it \;

		\ForEach{connected component $G_i$}{
			Locate a triangle in $G_i$ and assign three colours to its vertices\;

			\Repeat{Each vertex in $G_i$ is coloured}
			{
				Locate a vertex adjacent to all 3 vertices of an already coloured triangle in $G_i$\;
			}
		}
		\If{two adjacent vertices receive the same colour}{
			Output `non-4-colourable'\;

			Terminate\;
		}
		Glue the connected components back by merging the corresponding vertices of the two copies of each bottleneck pair\;

		Permute the colours of the vertices so that there is no conflict.
	\end{algorithm}

	Now, in light of the above Algorithm~\ref{alg:4colouring} and Theorem \ref{lemalgo}, we prove Theorem~\ref{colthm}.
	
	\begin{proof} [Proof of Theorem \ref{colthm}]
		Corollary~\ref{maincor} shows that the reduced subgraphs correspond to reduced polygons. 
		By Theorem~\ref{lemalgo} (and its proof), 4-colourable reduced polygons have unique 4-colourings
		which can be found iteratively by colouring each time a vertex with some
		three previously distinctly coloured neighbours. 
		Since the algorithm always chooses a colour for a vertex by
		this iterative scheme, the computed (partial) 4-colouring is the only one possible.
		So, the algorithm is correct.
		
		Let the number of vertices and edges in $G$ be $n$ and $m$ respectively. The
		bottleneck pairs that do not cross any other chord, can be found in $O(m^2)$ time.
		Thus, the decomposition of $P$ into reduced subpolygons takes $O(m^2)$ time.
		A vertex adjacent to every vertex of a coloured triangle can be found in $O(n)$ time.
		While computing the colouring on the shortest paths, a pointer
		can be kept on each of the shortest paths, and the colouring takes $O(n)$ time. The colouring step can be iterated at most once 
		for each vertex, so the complexity for all vertices is $O(n^2)$. Checking for conflict takes $O(m)$ time. Finally, rejoining 
		the reduced subgraphs takes $O(n)$ time. Thus, the complexity of the algorithm is $O(m^2)$.
	\end{proof}

	\section{Hardness of 5-colourability}
	\label{sec:hardness5}
	
	In this section we prove that the problem of deciding whether the visibility
	graph $G$ of a given simple polygon $P$ can be properly coloured with 
	$5$ colours, is NP-complete.
	
	Membership of our problem in NP is trivial (since $G$ can be efficiently
	computed from $P$ and then a colouring checked on~$G$).
	We are going to present a polynomial reduction from the NP-hard problem of
	$3$-colourability of general graphs.
	Our reduction shares some common ideas with reductions on visibility
	graphs presented in \cite{ls-cavg-95,DBLP:conf/fsttcs/CagiriciHR17},
	but the main difference is in not using the SAT problem (which makes our case even simpler).
	The rough outline of the reduction is depicted in
	Figure~\ref{fig:hardness-rough}.

	\begin{figure}[tb]
		$$
		\begin{tikzpicture}[scale=3]
		\tikzstyle{every node}=[minimum size=3pt, inner sep=0pt];
		\draw (2.2,0.25) node {edges of $H$} ;
		\draw (2.2,2.2) node {vertices of $H$} ;
		\draw (2.4,1.97) node {\bf\dots\dots\dots} ;
		\draw (2.4,0.52) node {\bf\dots\dots\dots} ;
		\draw (0.5,0.3) node {$v_1v_4$} ;
		\draw (0.9,0.35) node {$v_2v_6$} ;
		\draw (1.3,0.38) node {$v_4v_8$} ;
		\draw (1.7,0.40) node {$v_7v_{11}$} ;
		\tikzstyle{every path}=[draw,color=lightgray, thick];
		\draw (0,0.5) arc (101:79:11) -- (4.2,2) ;
		\draw (0,0.5) -- (0,2) arc (-101:-79:11) ;
		\tikzstyle{every path}=[draw,thick, color=black];
		\tikzstyle{every node}=[draw,shape=circle,fill=black, minimum size=2pt, inner sep=0pt];
		\draw (0,0.5) -- (0,2)
		-- (0.2,1.96) -- (0.25,2.05) node[label=above:$v_1$] {} -- (0.3,1.95)
		-- (0.4,1.93) -- (0.45,2.02) node[label=above:$v_2$] {} -- (0.5,1.915)
		-- (0.6,1.90) -- (0.65,1.99) node[label=above:$v_3$] {} -- (0.7,1.89)
		-- (0.8,1.875) -- (0.85,1.96) node[label=above:$v_4$] {} -- (0.9,1.865)
		-- (1.0,1.85) -- (1.05,1.94) node[label=above:$v_5$] {} -- (1.1,1.84)
		-- (1.2,1.83) -- (1.25,1.92) node[label=above:$v_6$] {} -- (1.3,1.825)
		-- (1.4,1.82) -- (1.45,1.9) node[label=above:$v_7$] {} -- (1.5,1.815)
		-- (1.6,1.81) -- (1.65,1.89) node[label=above:$v_8$] {} -- (1.7,1.803)
		-- (1.8,1.803) -- (1.85,1.89) node[label=above:$v_9$] {}
		-- (1.9,1.797) -- (2.0,1.797) ;
		\draw (4.2,0.5) -- (4.2,2)
		-- (4.0,1.96) -- (3.95,2.05) node[label=above:$v_n$] {} -- (3.9,1.95)
		-- (3.8,1.93) -- (3.75,2.02) node[label=above:$v_{n-1}$] {} -- (3.7,1.915)
		-- (3.6,1.90) -- (3.55,1.99) node[label=above:$\!\!\!\!\!v_{n-2}$] {}
		-- (3.5,1.89) -- (3.4,1.875) ;
		\draw (0,0.5) -- (0.5,0.58) -- (0.4,0.4) -- (0.6,0.4) -- (0.52,0.585)
		-- (0.9,0.635) -- (0.8,0.45) -- (1.0,0.45) -- (0.92,0.64)
		-- (1.3,0.67) -- (1.2,0.48) -- (1.4,0.48) -- (1.32,0.675)
		-- (1.7,0.692) -- (1.6,0.50) -- (1.8,0.50) -- (1.72,0.695) -- (1.9,0.7) ;
		\draw (4.2,0.5) -- (3.7,0.58) -- (3.8,0.4) -- (3.6,0.4) -- (3.68,0.585)
		-- (3.3,0.635) ;
		\tikzstyle{every path}=[draw, color=cyan, dashed,thick];
		\draw (0.855,1.96) -- (0.51,0.58) -- (0.25,2.05) ;
		\draw (0.445,2.02) -- (0.91,0.635) -- (1.255,1.92) ;
		\draw (0.85,1.96) -- (1.31,0.67) -- (1.655,1.89) ;
		\draw (1.45,1.9) -- (1.71,0.692) -- (2.3,1.87) ;
		\end{tikzpicture}
		$$
		\caption{A scheme of the polygon $P$ constructed from a
		given graph~$H$ in the proof of Theorem~\ref{thm:5hardness}.
		There are two mostly concave chains, top and bottom one.
		The top sawtooth chain features black-marked vertices
		$v_1,v_2,\ldots,v_n$ for each of the $n$ vertices of~$H$.
		The bottom chain contains, for each edge $v_iv_j$ of $H$
		(such as $v_1v_4,v_2,v_6,v_4,v_8,v_7v_{11}$ in the picture), a
		triangular pocket glued to $P$ by a tiny pinhole passage.
		This pocket of $v_iv_j$ is adjusted such that its lower
		corners can see precisely the top vertices $v_i$ and $v_j$,
		respectively (cf.~Figure~\ref{fig:hardness-vdetail}). 
		The important visibility are sketched here with dashed lines.
		Altogether, we can get a proper $5$-colouring of
		the visibility graph of~$P$ if and only if the vertices
		$v_i$ and $v_j$ receive distinct colours for every edge
		$v_iv_j\in E(H)$.}
		\label{fig:hardness-rough}
	\end{figure}
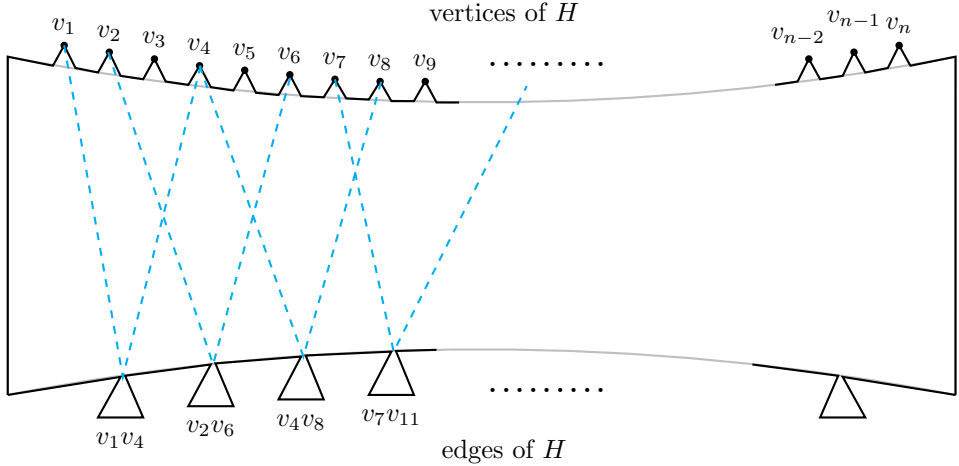

	\begin{theorem}
 	\label{thm:5hardness}
		The problem -- given a simple polygon $P$ in the plane, to decide whether
		the visibility graph of $P$ is properly $k$-colourable --
		is NP-complete for every~$k\geq5$.
	\end{theorem}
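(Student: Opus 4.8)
The overall plan is to certify membership in NP (already observed: one computes $G$ from $P$ in $O(n^2)$ time and then verifies a candidate colouring), and then to give a polynomial reduction from $3$-colourability of an arbitrary graph $H=(V(H),E(H))$ to $k$-colourability of a polygon visibility graph. Following the scheme of Figure~\ref{fig:hardness-rough}, I would build a long, thin simple polygon $P$ whose boundary consists of a top ``sawtooth'' concave chain and a bottom chain, joined by two short walls. The top chain carries one distinguished spike tip $v_\ell$ for every vertex of $H$; these tips are the only vertices that encode the colouring of $H$. For each edge $v_iv_j\in E(H)$ the bottom chain carries a small triangular pocket attached through a tiny pinhole, positioned so that the low-angle sightlines escaping the pinhole reach exactly the two tips $v_i$ and $v_j$ and nothing else on the top chain (the detail being Figure~\ref{fig:hardness-vdetail}).

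The colour budget is handled by two devices. First, I would install a small clique of $k-3$ mutually visible \emph{palette} vertices, placed near the walls so that each of them sees every spike tip; in any $k$-colouring these vertices occupy $k-3$ distinct colours, forcing all tips $v_1,\dots,v_n$ to take colours from the remaining palette of exactly three colours. Second, each edge pocket is engineered to act as an \emph{inequality gadget}: using the reserved colours together with the pinhole-controlled visibilities, its interior vertices admit a proper $k$-colouring if and only if the two tips it reaches, $v_i$ and $v_j$, receive different colours. Concretely, one can force a pocket corner $a$ that sees $v_i$ to take the same colour as $v_j$ by attaching a pair of mutually visible auxiliary vertices that see both $a$ and $v_j$ while being blocked from all reserved colours (such a pair is $k$-colourable precisely when $c(a)=c(v_j)$); the visibility edge between $a$ and $v_i$ then propagates into $c(v_i)\ne c(v_j)$.

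Granting the geometry, correctness is a short two-way argument. For the forward direction, from a proper $3$-colouring $c$ of $H$ I colour each tip $v_\ell$ by $c(v_\ell)$ with the three non-reserved colours, colour the palette clique with the remaining $k-3$ colours, and extend greedily over the pocket interiors; the pockets were built to be colourable exactly under the hypothesis $c(v_i)\ne c(v_j)$, which holds since $c$ is proper. For the backward direction, any proper $k$-colouring of $G(P)$ restricts the tips to the three-colour palette (by the palette clique) and, by the inequality gadgets, must satisfy $c(v_i)\ne c(v_j)$ for every $v_iv_j\in E(H)$; reading off the tip colours yields a proper $3$-colouring of $H$. Since $P$ has $O(|V(H)|+|E(H)|)$ vertices with a constant-size gadget per edge, the reduction is polynomial, establishing NP-hardness for each fixed $k\ge5$; the one scheme covers all such $k$ because only the palette size $k-3$ depends on $k$.

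The main obstacle I expect is the geometric realizability rather than the colouring bookkeeping: one must place the spike tips, the palette clique, and every pinhole-and-pocket so that the resulting straight-line visibility relation is \emph{exactly} the intended one --- each pocket seeing precisely its two tips, every palette vertex seeing all tips, and, crucially, no spurious visibilities among tips or between distinct pockets that would impose unintended colour constraints. Controlling all these sightlines while keeping the vertex coordinates of polynomial bit-length (so that $G(P)$ is genuinely computable in polynomial time) is the delicate step; I would handle it by fixing the spikes on a common shallow arc with carefully chosen slopes and by making each pinhole narrow enough that only its two designed rays escape, then verifying the visibility claims by elementary but careful analytic geometry.
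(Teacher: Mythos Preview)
Your high-level scheme matches the paper's exactly: reduce from graph $3$-colourability, place a spike tip $v_\ell$ on the top chain for each vertex of~$H$, and attach a pinholed pocket on the bottom chain for each edge $v_iv_j$, with sightlines arranged so that only the two intended tips are visible from the pocket interior. Where you diverge is in the gadgetry. The paper does \emph{not} use a separate global palette clique. Instead, the two pinhole corners $p_{ij}^1,p_{ij}^5$ of any one pocket already sit on the exposed bottom chain and are seen by every top tip, so their two colours confine all tips to the remaining three. The inequality is then enforced entirely inside the pocket by a clique-swap: the cliques $\{v_j,p_{ij}^1,p_{ij}^2,p_{ij}^3,p_{ij}^5\}$ and $\{p_{ij}^1,p_{ij}^2,p_{ij}^3,p_{ij}^4,p_{ij}^5\}$ are both $K_5$'s sharing four vertices, so $p_{ij}^4$ is forced to the colour of~$v_j$, and since $p_{ij}^4$ sees $v_i$ through the pinhole one gets $c(v_i)\neq c(v_j)$. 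No auxiliary equality vertices are needed.

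The gap in your version is precisely where your two devices have to interact. Your equality gadget asks for a pair of auxiliary vertices that simultaneously see (i)~a corner $a$ deep inside the pocket, (ii)~the tip $v_j$, and (iii)~enough structure to be ``blocked from all reserved colours''. If (iii) means adjacency to your global palette clique near a side wall, this is geometrically incompatible with~(i): anything seeing $a$ must do so through the pinhole, and the pinhole is by design aimed only at the two tips, not at a distant palette. If instead (iii) is to be achieved locally inside the pocket, you have not said what forces those local neighbours to carry exactly the $k-3$ reserved colours of the global palette; nothing in your setup synchronises the pinhole colours with the palette colours. The paper sidesteps this by making the pinhole vertices themselves the carriers of the reserved colours, so the restriction-to-three-colours argument and the equality argument use the \emph{same} two vertices and the whole computation is local to each pocket. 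For $k>5$ the natural fix in that framework is to thicken the pinhole to $k-3$ mutually visible corners rather than two, preserving both roles at once, rather than introducing a separate global clique.
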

	
	\begin{proof}
	As mentioned, the problem is in NP since one can construct the
	visibility graph $G$ of $P$ in polynomial time
	\cite{OROURKE1998105,Hershberger89} and then verify a colouring.
	In the opposite direction, we reduce from the NP-complete problem of
	$3$-colouring a given graph~$H$.

	Let $V(H)=\{v_1,\ldots,v_n\}$.
	The polygon $P$ constructed from $H$ is shaped as in Figure~\ref{fig:hardness-rough}.
	The top chain of $P$ consists of $3n+2$ vertices in a sawtooth
	configuration, such that the convex vertices of the teeth are marked
	by~$v_1,\ldots,v_n$. The picture is scaled such that each $v_i$ sees
	the whole bottom chain.
	The bottom chain contains, for each edge $v_iv_j\in E(H)$, $i<j$ (in an
	arbitrary order of edges), a ``pocket'' consisting of $5$ vertices
	$p_{ij}^1,p_{ij}^2,p_{ij}^3,p_{ij}^4,p_{ij}^5$ in order, as
	detailed in Figure~\ref{fig:hardness-vdetail}.
	Importantly, $p_{ij}^1$ and $p_{ij}^5$ are mutually so close that
	the vertices $p_{ij}^2,p_{ij}^3$ in the lower left corner can see
	only the vertex $v_j$ (of course, besides $p_{ij}^1$ and $p_{ij}^5$)
	and the vertex $p_{ij}^4$ in the lower right corner can see   
        only the vertex $v_i$.
	
	\begin{figure}[tb]
		$$
		\begin{tikzpicture}[xscale=1.6, yscale=1.4]
		\tikzstyle{every path}=[draw, color=black];
		\tikzstyle{every node}=[draw,shape=circle,fill=black, minimum size=2pt, inner sep=0pt];
		\draw (2,0.95) -- (3,1) node[label=200:$\!p_{ij}^1~~~$] {} 
		-- (2.4,0.03) node[label=130:$p_{ij}^2$] (p2) {} 
		-- (2.43,0) node[label=270:$p_{ij}^3$] (p3) {} 
		-- (3.85,0) node[label=right:$p_{ij}^4$] (p4) {}
		-- (3.1,1.01) node[label=350:$~~~p_{ij}^5\!\!\!\!$] {} -- (4,1.05) ;
		\draw (0,2.5) -- (0.7,2.5) 
		-- (1.5,3) node[label=above:$v_i$] {} -- (2.3,2.5) -- (2.5,2.5) ; 
		\draw (3.3,2.53) -- (3.5,2.53) 
		-- (4.35,3.02) node[label=above:$v_j$] {} -- (5,2.55) -- (5.5,2.55) ; 
		\tikzstyle{every path}=[draw, color=cyan, dashed];
		\draw (1.6,3) -- (p4) -- (1.33,3);
		\draw (4.25,3) -- (p2) -- (4.53,3);
		\draw (4.44,3) -- (p3) -- (4.16,3);
		\end{tikzpicture}
		$$
		\caption{A detail (not to scale) of the pocket $v_iv_j$ from Figure~\ref{fig:hardness-rough}.
		Note that $v_j$ and $p_{ij}^4$ see the same four vertices
		$p_{ij}^1,p_{ij}^2,p_{ij}^3,p_{ij}^5$, and so they have to be coloured the same.}
		\label{fig:hardness-vdetail}
	\end{figure}
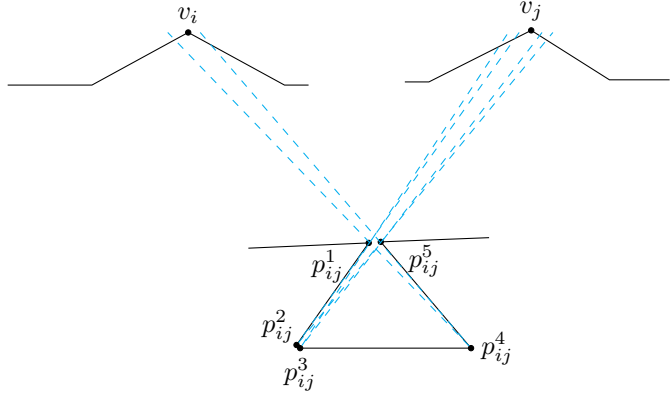

	Assume now that we have got a proper $5$-colouring of the visibility
	graph $G$ of the constructed polygon~$P$.
	We easily argue the following:
	\begin{itemize}
	\item Choose any edge $v_iv_j\in E(H)$. Then the vertices $p_{ij}^1$
	and $p_{ij}^5$ of the corresponding pocket must receive distinct
	colours which we, up to symmetry, denote by $4$ and~$5$.
	Since every vertex of the top chain sees $p_{ij}^1$ and $p_{ij}^5$,
	we get that every vertex $v_k$, $k=1,\ldots,n$, has a colour $1,2$ or~$3$.

	\item For each edge $v_iv_j\in E(H)$, the $5$-tuple of vertices
	$(v_j,p_{ij}^1,p_{ij}^2,p_{ij}^3,p_{ij}^5)$ of $P$ induces a $K_5$,
	and so does the nearly-identical $5$-tuple
	$(p_{ij}^1,p_{ij}^2,p_{ij}^3,p_{ij}^4,p_{ij}^5)$.
	Consequently, in any proper $5$-colouring of~$G$, the vertices $v_j$ 
	and $p_{ij}^4$ get the same colour.
	And since $p_{ij}^4$ sees $v_i$, the colours of $v_i$ and $v_j$ must
	be distinct. 
	\end{itemize}
	Altogether, any proper $5$-colouring of the visibility
        graph $G$ of~$P$ implies a proper $3$-colouring of the graph~$H$.

	On the other hand, assume a proper colouring of the graph $H$ by
	colours $\{1,2,3\}$.
	We give the same colours to the vertices $v_1,\ldots,v_n$ of the top
	chain of~$P$, and we can always complete (e.g., greedily from left to right)
	this partial colouring to a proper $3$-colouring of the top chain of~$P$.
	Then we assign alternate colours $4,5,4,5,\ldots$ to the exposed
	vertices of the bottom chain.
	Finally, we colour the lower corners of the bottom pockets as
	follows; for an edge $v_iv_j\in E(H)$, we give $p_{ij}^4$ the colour
	of $v_j$, and to $p_{ij}^2,p_{ij}^3$ the remaining two colours among $1,2,3$.
	This gives a proper $5$-colouring of the visibility graph $G$ of~$P$.

	The last bit is to show that the construction of $P$ can be realized
	in a grid of polynomial size in~$n=|V(H)|$.
	Both the top and bottom concave shapes can be realized as ``fat'' parabolas,
	requiring only rough resolution of $\Theta(n^2)$ in both
	horizontal and vertical directions.
	This is fully sufficient for the top chain, but realizing the pockets
	of the bottom chain is more delicate.
	Still, fine placement of the pocket of an edge $v_iv_j$ depends
	only on the vertices $v_i$ and $v_j$ of the top chain, and not on
	other pockets. 
	Within the main scale, each pocket has dimensions $\Theta(n)$ and
	the pinhole opening is, say, $\frac1n$, and hence a sufficient precision
	for adjusting the pocket corners is $\Theta(\frac1n)$.
	Altogether, the construction of $P$ is
	achieved on an $\mathcal{O}(n^3)$ grid.
	\end{proof}

	\section{Hardness of 4-colourability with holes}
	\label{sec:polygonswithholes}
	Consider a polygon $P$ together with a collection of pairwise
	disjoint polygons $Q_i$, $i=1,\ldots,k$, such that $Q_i\subseteq int(P)$.
	Then the set $P\setminus int\big(Q_1\cup\ldots\cup Q_k\big)$ is
	called a {\em polygon with holes}.
	In this section we prove that, for polygons with holes, already
	$4$-colourability is an NP-complete problem.
	Given the algorithm for $4$-colouring from Section~\ref{sec1}, it is natural
	that the proof we are going to present should be very different from the
	reduction in Section~\ref{sec:hardness5}.

	For better clarity, we present a construction of a polygon with
	holes as ``digging polygonal corridors in solid mass''.
	These corridors (precisely, their topological closure) will then
	form the point set of our polygon, while the ``mass trapped between''
	corridors will form the holes in the polygon.
	On a high level, our corridors will be composed of elementary
	{\em channels}, as depicted in Figures~\ref{fig:hardness-vertstrip} and
	\ref{fig:hardness-edgestrip}, placed along the lines of a large
	hexagonal (honeycomb) grid in the plane. More details follow next.

	\begin{figure}[tb]
		$$
		\begin{tikzpicture}[scale=1.5]
		\tikzstyle{every node}=[draw,solid,shape=circle,fill=white, inner sep=1.2pt];
		\tikzstyle{every path}=[draw, thick,solid, color=black];
		\draw (1,0) node[label=left:$a_1$] (a){}
		 ++(150:1) node[label=left:$a_2$] (b){}
		 ++(30:1) node[label=left:$a_3$] (c){} ;
		\draw (7,0) node[label=right:$b_1$] (k){}
		 ++(30:1) node[label=right:$b_2$] (l){}
		 ++(150:1) node[label=right:$b_3$] (m){} ;
		\draw (c) -- (4,0.33) node[label=above:$c$] (d){} -- (m) ;
		\draw (a) -- (k) ;
		\begin{scope}[on background layer]
		\tikzstyle{every path}=[draw, line width=3pt, color=gray!30];
		\draw[transform canvas={yshift=-2pt}] (a) -- (k) ;
		\draw[transform canvas={yshift=+2pt}] (c) -- (d) -- (m);
		\end{scope}
		\tikzstyle{every path}=[draw, color=brown, very thick, dotted];
		\draw (a) -- (b) -- (c) -- (a) ;
		\draw (k) -- (l) -- (m) -- (k) ;
		\tikzstyle{every path}=[draw, thin,dashed, color=cyan];
		\draw (a) -- (l) -- (d) -- (b) -- (k) ;
		\draw (a) -- (d) -- (k) ;
		\end{tikzpicture}
		$$ $$
		\begin{tikzpicture}[scale=0.4]
		\path [use as bounding box] (0,-6) rectangle (8,8);
		\def\vertchan#1#2{%
		\tikzstyle{every node}=[draw,solid,shape=circle,fill=white, inner sep=1.2pt];
		\tikzstyle{every path}=[draw, very thick,solid,	color=gray!30];
		\draw (1.08,-0.1) -- (6.92,-0.1) ;
		\draw (1.08,1.08) -- (4,0.42) -- (6.92,1.08) ;
		\tikzstyle{every path}=[draw, thick,solid, color=black];
		\draw (1,0) node (a){} ++(150:1) node[#2] (b){} ++(30:1) node (c){} ;
		\draw (7,0) node (k){} ++(30:1) node[#2] (l){} ++(150:1) node (m){} ;
		\draw (c) -- (4,0.33) node (d){} -- (m) ;
		\draw (a) -- (k) ;
		\tikzstyle{every path}=[draw, color=brown, very thick, dotted];
		\draw (a) -- (b) -- (c) -- (a) ;
		\draw (k) -- (l) -- (m) -- (k) ;
		\tikzstyle{every path}=[draw, thick,solid, color=black];
		#1 ;
		}
		\begin{scope}[transform canvas={xshift=-10mm,yshift=-24.2mm},rotate=60]
			\vertchan{}{} ;
		\end{scope}
		\begin{scope}[transform	canvas={xshift=-13.6mm,yshift=26.3mm},rotate=-60]
			\vertchan{}{} ;
		\end{scope}
		\begin{scope}[transform	canvas={xshift=29.4mm,yshift=-1.45mm},rotate=60]
			\vertchan{}{} ;
		\end{scope}
			\vertchan{\draw (k)--(l)}{fill=black} ;
		\begin{scope}[transform	canvas={xshift=-39.4mm,yshift=-22.7mm}]
			\vertchan{\draw (k)--(l); \draw (b)--(a);
				\tikzstyle{every path}=[draw,dashed, color=brown,thin];
				\draw[->] (0.7,0.5)--(0,1.7);
				}{fill=black} ;
		\end{scope}
		\begin{scope}[transform	canvas={xshift=-39.6mm,yshift=22.8mm}]
			\vertchan{\draw (m)--(l); \draw (b)--(a);
				\tikzstyle{every path}=[draw,dashed, color=brown,thin];
				\draw[->] (0.7,0.5)--(0,1.7);
				}{fill=black} ;
		\end{scope}
		\begin{scope}[transform	canvas={xshift=39.4mm,yshift=22.8mm}]
			\vertchan{\draw (b)--(c);
				\tikzstyle{every path}=[draw,dashed, color=brown,thin];
				\draw[->] (7.3,0.5)--(8,1.7);
				\draw[->] (7.3,0.5)--(8,-0.6)
				}{fill=black} ;
		\end{scope}
		\end{tikzpicture}
		$$
		\caption{An illustration of the proof of Theorem~\ref{thm:4hardness}.
		\\Top: a picture of the {\em vertex channel}, where the
		``solid mass'' remains outside (as indicated by the shade).
		Note that the colour of $c$ is unique in the picture.
		In any proper $4$-colouring,
		$b_1$ must be of the same colour as $a_3$ (since both
		see the triangle $a_1a_2c$), then similarly $b_2$ of the
		same colour as $a_2$ and $b_3$ as~$a_1$.
		\\
		Bottom: How vertex channels are composed by gluing at triangle joins
		along the shape of a hexagonal grid.
		All the black vertices (the {\em flag vertices}) must
		receive the same colour in any proper $4$-colouring.
		}
		\label{fig:hardness-vertstrip}
	\end{figure}

	\begin{figure}[tbp]
		$$
		\begin{tikzpicture}[scale=2.15, rotate=60]
		\tikzstyle{every node}=[draw,solid,shape=circle,fill=white, inner sep=1.2pt];
		\tikzstyle{every path}=[draw, thick,solid, color=black];
		\draw (1,0) node[label=right:$a_1$, fill=black] (a){}
		 ++(150:1) node[label=left:$a_2$] (b){}
		 ++(30:1) node[label=left:$a_3$] (c){} ;
		\draw (7,0) node[label=right:$b_1$] (k){}
		 ++(30:1) node[label=right:$b_2$] (l){}
		 ++(150:1) node[label=left:$b_3$, fill=black] (m){} ;
		\draw (c) -- (2.7,0.39) node[label=left:$c_1$] (d){} 
		 -- (3.0,0.405) node[label=left:$c_2$] (dd){} -- (m) ;
		\draw (a) -- (4.4,0.39) node[label=right:$d_1$] (ee){} 
		 -- (4.4,0.52) node[label=left:$d_2~$] (e){} -- (k) ;
		\begin{scope}[on background layer]
		\tikzstyle{every path}=[draw, line width=3pt, color=gray!30];
		\draw[transform canvas={xshift=-1.7pt,yshift=+1pt}]
			(c) -- (d) -- (dd) -- (m) ;
		\draw[transform canvas={xshift=+1.7pt,yshift=-1pt}]
			(a) -- (ee) -- (e) -- (k) ;
		\end{scope}
		\tikzstyle{every path}=[draw, fill=none, thick, color=black];
		\draw (c) -- (d) -- (dd) -- (m);
		\draw (a) -- (ee) -- (e) -- (k);
		\tikzstyle{every path}=[draw, color=brown, very thick, dotted];
		\draw (a) -- (b) -- (c) -- (a) ;
		\draw (k) -- (l) -- (m) -- (k) ;
		\tikzstyle{every path}=[draw, thin,dashed, color=cyan];
		\draw (dd) -- (a) -- (m) ;
		\draw (b) -- (d) -- (ee) ;
		\draw (dd) -- (e) -- (m) ; \draw (d) -- (a) ;
		\end{tikzpicture}
		~
		\begin{tikzpicture}[scale=0.8, rotate=60,
			triangle/.style = {inner sep=1.2pt, regular polygon, regular polygon sides=3},
			dtriangle/.style = {inner sep=1.2pt, regular polygon, regular polygon sides=3, rotate=180}
		]
		\tikzstyle{every node}=[draw,solid,shape=circle,fill=white, inner sep=1.7pt];
		\tikzstyle{every path}=[draw, thick,solid, color=black];
		\def\onechan#1#2#3#4{%
		\draw (1,0) node[diamond,fill=red] (a){} 
			++(150:1) node[dtriangle,fill=green] (b){} 
			++(30:1) node[triangle,fill=blue] (c){} ;
		\draw (7,0) node[fill=#3] (k){} ++(30:1) node[fill=#4] (l){}
			++(150:1) node[fill=#2] (m){} ;
		\draw (c) -- (2.7,0.42) node[fill=yellow] (d){} 
			-- (3.1,0.43) node[fill=#1] (dd){} -- (m) ;
		\draw (a) -- (4.8,0.2) node[fill=#2] (ee){} 
			-- (4.8,0.5) node[fill=yellow] (e){} -- (k) ;
		\draw[dotted] (a) -- (b) -- (c) -- (a) ;
		\draw[dotted] (k) -- (l) -- (m) -- (k) ;
		}
			\onechan{blue,triangle}{green,dtriangle}{red,diamond}{blue,triangle}
		\begin{scope}[transform canvas={yshift=+30mm}]
			\onechan{blue,triangle}{green,dtriangle}{blue,triangle}{red,diamond}
		\end{scope}
		\begin{scope}[transform canvas={yshift=+60mm}]
			\onechan{green,dtriangle}{blue,triangle}{red,diamond}{green,dtriangle}
		\end{scope}
		\begin{scope}[transform canvas={yshift=+90mm}]
			\onechan{green,dtriangle}{blue,triangle}{green,dtriangle}{red,diamond}
		\end{scope}
		\end{tikzpicture}
		$$
		\caption{Left: a picture of the {\em edge channel}.
		Some important fine details (which cannot be clearly displayed in this scale) are:
		$a_1$ sees $c_2$ and $b_3$, $c_1$ sees $d_1$ but not $d_2$,
		neither of $d_1,c_2$ can see~$a_2$ and neither of $c_1,c_2$ can see~$b_2$.
		Note that in any proper $4$-colouring, $a_1$ and $b_3$ must
		receive distinct colours, while $c_1$ and $d_2$ must have
		the same colour (since they both see the triangle~$a_1c_2d_1$).
		Hence, in particular, the triple of colours used on $a_1a_2a_3$ must be the
		same (up to ordering) as the triple of colours on~$b_1b_2b_3$.
		\\[3pt]
		Right: Examples of proper $4$-colourings of the edge channel.
		Note that the flexibility of these $4$-colourings is not in a
		contradiction with Theorem~\ref{lemalgo} since the chord
		$a_1c_1$ is not crossed by other chords, and likewise the chord~$b_3d_2$.
		}
		\label{fig:hardness-edgestrip}
	\end{figure}
	
	\begin{theorem}\label{thm:4hardness}
		The problem -- given a polygon with holes $P$ in the plane, 
		to decide whether the visibility graph of $P$ is properly $k$-colourable --
		is NP-complete for every~$k\geq4$.
	\end{theorem}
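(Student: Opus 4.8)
The plan is to prove NP-hardness by reducing from the problem of $3$-colouring a planar graph~$H$; we may assume $H$ planar since $3$-colourability is NP-complete already for planar graphs, and this is what will let the corridors be routed without crossings. Membership in NP is immediate, since the visibility graph $G$ of a polygon with holes can be computed in polynomial time and a colouring then verified. The overall idea, as suggested by Figures~\ref{fig:hardness-vertstrip} and~\ref{fig:hardness-edgestrip}, is to ``dig'' a system of thin corridors into solid mass laid out along a honeycomb grid; the trapped mass forms the holes and, crucially, blocks all lines of sight except those we explicitly design inside a single channel or at a shared triangle join. I would fix a planar straight-line embedding of $H$ on a grid and realize each vertex of $H$ by a connected cluster of \emph{vertex channels} and each edge of $H$ by an \emph{edge channel} routed between the corresponding clusters.

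First I would establish the colour-logical behaviour of the two channel types from their (to be verified) local visibility structure, working with $4$ colours. For the vertex channel I would prove the rigidity claimed in Figure~\ref{fig:hardness-vertstrip}: since $a_3$ and $b_1$ both see the triangle $a_1a_2c$, they are forced to the unique fourth colour, and inductively every clique-join propagates the colouring across the channel, so within one cluster the colouring is determined up to the initial triangle; in particular every ``flag'' vertex $c$ is pinned to the single colour missing from the triangles, which I use as a global \emph{reference} colour, say~$4$. Each vertex cluster then carries a signal from the palette $\{1,2,3\}$, namely the colour of a designated output vertex, encoding the colour of the corresponding vertex of~$H$. For the edge channel I would prove the two facts highlighted in Figure~\ref{fig:hardness-edgestrip}: that the triple of colours on $a_1a_2a_3$ equals the triple on $b_1b_2b_3$ (so both the palette $\{1,2,3\}$ and the reference colour are preserved across the channel, using that $c_1$ and $d_2$ both see $a_1c_2d_1$ and hence share a colour), and that the two signal-carrying vertices $a_1,b_3$ are forced to distinct colours. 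Thus an edge channel enforces exactly $c(u)\neq c(v)$, while --- unlike a vertex channel --- it permits the remaining permutational freedom, consistent with Theorem~\ref{lemalgo} since its defining chords are uncrossed. Note that the reference mechanism is essential: without it the signals would range over all four colours and the construction would encode $4$-colourability of~$H$, which is vacuous for planar~$H$.

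With the gadget lemmas in hand the correctness is routine in both directions. Given a proper $3$-colouring of $H$, I set each cluster's signal vertex to the colour of its vertex, all references to~$4$, complete each rigid vertex channel in its unique way, and fill each edge channel using the flexibility exhibited in Figure~\ref{fig:hardness-edgestrip}; since adjacent signals differ, this yields a proper $4$-colouring of~$G$. Conversely, from any proper $4$-colouring of~$G$ the vertex-channel rigidity forces all references to a common colour (without loss of generality~$4$) and each cluster's signal into $\{1,2,3\}$, and the edge channels force adjacent signals to differ, so reading off the signals gives a proper $3$-colouring of~$H$.

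The technical heart --- and the step I expect to be the main obstacle --- is the geometric realization. I would have to place the channels on a grid whose size is polynomial in $n=|V(H)|$ and choose coordinates so that each channel has \emph{exactly} the delicate visibility pattern listed in the captions (e.g.\ that $c_1$ sees $d_1$ but not $d_2$, and that neither of $d_1,c_2$ sees $a_2$), while simultaneously guaranteeing that the solid mass between corridors blocks every visibility between vertices of different channels; proving the absence of such ``leaks'' for the fully assembled polygon, rather than for a single channel in isolation, is the delicate part, analogous to but more intricate than the pocket adjustment in the proof of Theorem~\ref{thm:5hardness}. Finally, to obtain hardness for every fixed $k\geq4$ I would replace each triangle $K_3$ throughout by a $(k-1)$-clique: a vertex seeing a $(k-1)$-clique is again forced to the unique remaining colour, so the rigidity and propagation arguments carry over verbatim, with $k-4$ of the clique colours serving as additional fixed references and three colours left free to carry the signal; this keeps the construction of polynomial size for each fixed~$k$.
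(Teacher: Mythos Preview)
Your approach is essentially the paper's: reduce from planar $3$-colouring, represent $H$ as a minor of a hexagonal grid (each vertex by a subtree of vertex channels, each edge by a single edge channel), and argue rigidity and triple-preservation through the channels exactly as you outline. Two remarks: what the paper calls the \emph{flag vertex} is the middle-$y$-coordinate vertex of each triangle join (the black vertices in Figure~\ref{fig:hardness-vertstrip}), which lies \emph{in} the triangle and carries the signal colour from $\{1,2,3\}$, not the notch $c$ --- your $c$ does take the fourth ``reference'' colour as you say, so this is only a naming swap and your logic is intact; and for $k\geq5$ the paper does not enlarge the gadgets to $(k{-}1)$-cliques but simply invokes Theorem~\ref{thm:5hardness}, since a simple polygon is already a polygon with (zero) holes, which spares all the extra geometric realization work you anticipate.
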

	
	\begin{proof}
	The claim follows from Theorem~\ref{thm:5hardness} for $k\geq5$, and
	so we consider only~$k=4$ here.
	Again, the problem is clearly in NP.
	In the opposite direction, we reduce from the NP-complete problem of
	$3$-colouring a given {\em planar} graph~$H$.

	We first recall a folklore claim that every planar graph $H$ can be
	represented in a usual sufficiently large hexagonal grid in the following way:
	there is a collection of pairwise disjoint subtrees of the grid
	$T_v$: $v\in V(H)$ (representatives of the vertices of~$H$) such that,
	for every edge $uv\in E(H)$, the grid contains an edge between
	$V(T_u)$ and $V(T_v)$ (called a representative edge of~$uv$). 
	(In other words, $H$ is a minor of the grid.)
	To simplify our construction, we may moreover assume that we always
	choose representative edges in the grid which are not of horizontal
	direction (out of the three directions $0^\circ$, $120^\circ$ and~$240^\circ$).

	Having such a representation of the given planar graph $H$ in the
	grid, we continue as follows.
	Let a {\em vertex channel} be the polygonal fragment shown in
	Figure~\ref{fig:hardness-vertstrip}, where the triples $a_1a_2a_3$
	and $b_1b_2b_3$ are the {\em triangle joins} of the channel.
	Channels are composed, after suitable rotation, by gluing their
	triangle joins together, as illustrated in bottom part of 
	Figure~\ref{fig:hardness-vertstrip}.
	When no further channel is glued to a join, then the dotted triangle
	edge(s) is ``sealed'' by a polygon edge.
	Let an {\em edge channel} be the polygonal fragment shown in
        Figure~\ref{fig:hardness-edgestrip}, again having two triangle joins
	$a_1a_2a_3$ and $b_1b_2b_3$ at its ends.
	Edge channels are used and composed in a same way with vertex
	channels, but edge channels cannot be rotated, only mirrored by the
	vertical axis (that is why we do not use them along horizontal grid edges).
	Altogether, we construct a polygon $P$ from $H$ by composing copies of the
	vertex channel along all the grid edges of each $T_v$, $v\in V(H)$,
	and by further composing in copies of the edge channel (possibly mirrored)
	along the representative edges of $H$ in the grid.

	Assume now that we have got a proper $4$-colouring of the visibility graph of $P$.
	For each triangle join, let the vertex with the middle $y$-coordinate
	be called the {\em flag vertex} (it is the vertex which is extreme
	to the left or right).
	One can easily check from Figure~\ref{fig:hardness-vertstrip} that,
	among all vertex channels of one $T_v$, $v\in V(H)$, all the
	triangle joins receive the same unordered triple of colours and, in
	particular, all the flag vertices have the same one colour.
	The same claim can also be derived from Theorem~\ref{lemalgo} applied to
	the standalone simple polygon formed by the vertex channels of $T_v$.
	Furthermore, one can check from Figure~\ref{fig:hardness-edgestrip},
	that also the edge channel maintains the property that both its
	triangle joins must receive the same unordered triple of colours.

	Naturally assuming connectivity of $H$, we hence conclude that every
	triangle join constructed in $P$ receives the same unordered triple
	of colours, say~$\{1,2,3\}$.
	Now, to each vertex $v$ of $H$ we assign the unique colour from
	$\{1,2,3\}$ which occurs on the flag vertices of~$T_v$.
	Since the two flag vertices of the edge channel see each other
	($a_1$ and $b_3$ in Figure~\ref{fig:hardness-edgestrip}), this
	ensures that for every edge $uv\in E(H)$ the colours assigned to $u$
	and $v$ are distinct, and so $H$ is $3$-colourable.

	In the converse direction, we assume that $H$ has a proper $3$-colouring.
	We can routinely $4$-colour the polygonal fragments of each $T_v$, 
	$v\in V(H)$, such that all the flag vertices of $T_v$ get the colour of~$v$.
	Then, for each $uv\in E(H)$ with distinct colours on $u$ and~$v$, 
	we can complete proper $4$-colouring of the fragment of $P$ made by 
	the representative edge channel of $uv$, as shown in the right part
	of Figure~\ref{fig:hardness-edgestrip}.
	Hence the visibility graph of~$P$ is then $4$-colourable.

	Finally, the construction of $P$ is easily done (with negligible
	distortion of the angles of hexagonal grid) within polynomial
	resolution and so in polynomial time.
	\end{proof}

	\section{Conclusions}\label{sec:conclu}
	In this paper we have shown that the problem of deciding 5-colourability for visibility graphs of simple polygons, is NP-complete.
	We have also proved that the 4-colouring problem can be solved for visibility graphs of simple polygons, in polynomial time,
	whereas for visibility graphs of polygons with holes, it becomes NP-complete.
	However, it still remains to be explored whether approximation algorithms
	could exist for the hard colouring problems on visibility graphs of polygons.

	\bibliographystyle{siamplain}
	\bibliography{references}
	
\end{document}